\documentclass[11pt,a4paper]{amsart}
\usepackage[utf8]{inputenc}
\usepackage[T1]{fontenc}
\usepackage{amsmath,amssymb,amsthm,mathtools}
\usepackage[margin=1.05in]{geometry}
\usepackage{hyperref}
\usepackage{enumitem}

\theoremstyle{plain}
\newtheorem{theorem}{Theorem}[section]
\newtheorem{proposition}[theorem]{Proposition}
\newtheorem{lemma}[theorem]{Lemma}
\newtheorem{corollary}[theorem]{Corollary}
\newtheorem{question}[theorem]{Question}
\theoremstyle{definition}
\newtheorem{definition}[theorem]{Definition}
\newtheorem{convention}[theorem]{Convention}
\newtheorem{example}[theorem]{Example}
\newtheorem{hypothesis}[theorem]{Hypothesis}
\theoremstyle{remark}
\newtheorem{remark}[theorem]{Remark}

\newcommand{\Xt}{\widetilde X}
\newcommand{\I}{\mathcal I}
\newcommand{\J}{\mathcal J}
\newcommand{\Hn}{\mathcal H^{n}}
\newcommand{\rlct}{\operatorname{rlct}}
\newcommand{\lct}{\operatorname{lct}}
\newcommand{\ord}{\operatorname{ord}}
\newcommand{\Jac}{\operatorname{Jac}}
\newcommand{\Exc}{\operatorname{Exc}}
\newcommand{\Supp}{\operatorname{Supp}}
\newcommand{\codim}{\operatorname{codim}}
\newcommand{\AH}{\mathrm{AH}}
\newcommand{\DPH}{\mathrm{DPH}}
\newcommand{\Sd}{\operatorname{Sd}}

\newcommand{\R}{\mathbb R}
\newcommand{\C}{\mathbb C}
\newcommand{\Q}{\mathbb Q}
\newcommand{\Z}{\mathbb Z}
\newcommand{\LOG}{|\!\log\varepsilon|}

\title[Divisorial persistent homology and volume asymptotics]{Divisorial persistent homology\\ and volume asymptotics of analytic pairs}
\author{Nivaldo Grulha}
\address{Instituto de Ci\^encias Matem\'aticas e de Computa\c c\~ao, Universidade de S\~ao Paulo, S\~ao Carlos, SP, Brazil.}
\email{njunior@icmc.usp.br}
\thanks{ORCID: 0000-0003-4977-9070. Dedicated to the memory of Maria Aparecida Ruas (Cidinha).}
\subjclass[2020]{32S05, 32S45, 14E15, 14B05, 55N31}
\keywords{log-resolution, real log canonical threshold, divisorial valuation, log discrepancy, persistence module, dual complex, normal surface singularity}

\begin{document}

\begin{abstract}
A log-resolution $\pi\colon\Xt\to X$ of an analytic pair $(X,\I)$ attaches to every prime divisor $E$ of its total transform a vanishing order $\nu_E$, a Jacobian order $a_E$, and hence a divisorial exponent $\gamma_E=(a_E+1)/(2\nu_E)$. The smallest of these exponents is one half of the real log canonical threshold and governs the leading term of the volume of the energy sublevel sets $\{\sum|f_j|^2\le\varepsilon\}$; the multiplicity of the logarithmic correction is a second, equally intrinsic, invariant. The other exponents are usually discarded. This paper organizes all of them into two persistence modules and studies what survives when the resolution is changed.

The first module lives on $X$: to each compact subanalytic chain $c$ we attach a purely valuative admissibility threshold $\delta(c)$, the infimum of the normalized log discrepancy $\gamma(v)=A(v)/(2v(\I))$ over the divisorial valuations $v$ that reach $c$. The superlevel sets $\{\delta\ge\alpha\}$ form a filtration of the chain complex by subcomplexes, and the structure theorem identifies the resulting persistence module, intrinsically and without choices, with the persistent homology of the complements of the sublevel sets $\{p:\tfrac12\rlct_p(\I)<\alpha\}$ of the local threshold. Tameness, Mayer--Vietoris, independence of the resolution, and invariance under bi-Lipschitz subanalytic homeomorphisms that preserve the energy up to asymptotic equivalence follow.

The second module lives on the resolution: the dual complex of the total transform, filtered by the divisorial exponents. We prove that a blow-up along a smooth centre compatible with the boundary divisor changes each sublevel complex either by a stellar subdivision or by attaching a cone over a contractible subcomplex; the only numerical input is a two-sided mediant inequality for the exponent of the new divisor. Under an explicitly stated factorization hypothesis on the pair (unconditionally satisfied for surface germs, and expected to follow from the relative form of weak factorization in the complex algebraic setting), we prove that the sublevel-set persistent homology of the filtered dual complex is independent of the log-resolution; without that hypothesis, we prove invariance under every single admissible blow-up. For normal surface singularities the module is the persistent homology of the weighted dual graph; we compute it for the $A_n$ rational double points with the actual Jacobian orders and describe it qualitatively for cusp singularities.

The two modules share their first critical value and the finite list of exponents, have opposite persistence directions, and are different shadows of the same divisorial data. The volume asymptotics of sublevel sets is presented as the analytic reason why the exponents $\gamma_E$ are the natural weights, and is stated only in the dimension in which it holds.
\end{abstract}

\maketitle

\section*{Introduction}

Numerical invariants extracted from a resolution of singularities are effective and, at the same time, forgetful. The real log canonical threshold $\rlct(\I)$ of a coherent ideal $\I$ on a real analytic space is the prototype: it is a single number, computable from any log-resolution as a minimum of ratios $(a_E+1)/\nu_E$; it controls the leading term of the volume of the sublevel sets of the energy $K_\I=\sum_j|f_j|^2$, the rightmost pole of the associated zeta function, and, through the work of Watanabe \cite{Wat09,Wat24}, the learning coefficient of singular statistical models. But it is a minimum: all information about the other divisors, and about how the divisors meet, is lost.

The point of view of this paper is that a log-resolution should be regarded not as a device for computing one number but as a source of a filtered structure. Each prime divisor $E$ of the total transform carries a weight
\[
\gamma_E=\frac{a_E+1}{2\nu_E},
\]
where $\nu_E$ is the order of $\I$ along $E$ and $a_E$ the order of the Jacobian of the resolution along $E$. These weights are valuative invariants: they depend on the divisorial valuation $\ord_E$ and not on the model on which $E$ appears. Ordering the divisors by weight produces filtrations, and filtrations produce persistence modules. The question then becomes:

\begin{quote}
\emph{Which features of the weighted incidence structure of the divisors of a log-resolution are independent of the resolution, and what do they say about the pair $(X,\I)$?}
\end{quote}

We construct and study two such persistence modules, which are kept distinct throughout.

\subsection*{Relation to existing theory}
Before describing the two modules, we describe how the present work relates to existing theory. The function $v\mapsto A(v)/v(\I)$ on the space of valuations, whose infimum is the log canonical threshold, is a standard object of birational geometry \cite{JM12,BFJ08}; the homotopy invariance of dual complexes of resolutions and of their sublevel sets under weight functions is well understood in the complex algebraic setting \cite{Ste06,dFKX17,KX16,Pay13,MN15}. The valuative language and the birational invariance of dual complexes are not new. The contribution of the present paper is confined to the following three points:
\begin{enumerate}[label=(\roman*),leftmargin=2em]
\item it works in the \emph{real analytic} setting, with the Mather (Jacobian) discrepancy on possibly singular $X$, where the classical algebraic tools (weak factorization, the valuation space and its retractions) are not directly available;
\item it fixes the weight $\gamma_E$, and in particular the normalization by $2\nu_E$, by an analytic statement about the volume of the quadratic sublevel sets $\{K_\I\le\varepsilon\}$ (Theorem \ref{thm:volume}), so that the weights are not a choice but a consequence;
\item it organizes the divisorial data \emph{persistently}, both on $X$ (Section \ref{sec:valuative}) and on the resolution (Section \ref{sec:dual}), and isolates the exact combinatorial mechanism, the mediant inequality (Lemma \ref{lem:mediant}), through which a blow-up acts on the filtered dual complex.
\end{enumerate}
Section \ref{sec:dual} can be read as a real, filtered, and elementary version of statements that in the complex algebraic case follow from the invariance of weight-function skeleta; Remark \ref{rem:literature} spells out the comparison. The explicit mediant argument and its applicability to real analytic pairs constitute the contribution.

\subsection*{The valuative filtration of chains}
A divisorial valuation $v$ over $X$ (Definition \ref{def:valuation}: divisors on models obtained from a log-resolution by admissible blow-ups) \emph{reaches} a compact subanalytic chain $c$ if the divisor realizing $v$ on some model meets the total transform of $|c|$. Set
\[
\delta(c)=\inf\{\gamma(v): v \text{ reaches } c\},\qquad \gamma(v)=\frac{A(v)}{2\,v(\I)},
\]
where $A(v)$ is the log discrepancy of $v$. This is a filtering function on the group of subanalytic chains: it is monotone under inclusion of supports, hence $C^\alpha_k(X,\I)=\{c:\delta(c)\ge\alpha\}$ is a subcomplex, and $\AH^\alpha_k(X,\I)=H_k(C^\alpha_\bullet)$ is a persistence module over $(\R_{>0},\ge)$. Its definition involves no measure, no volume and no distinguished resolution; a log-resolution enters only as a computational device, and Theorem \ref{thm:divcomp} shows that $\delta(c)$ is computed on any log-resolution as the smallest weight of a divisor meeting the total transform of $|c|$.

The main result about this module is a structure theorem (Theorem \ref{thm:structure}): if $Z_\alpha=\{p\in X:\tfrac12\rlct_p(\I)<\alpha\}$ denotes the sublevel set of the local threshold, a closed subanalytic subset of $X$ equal to the image of the divisors of weight $<\alpha$, then
\[
\AH^\alpha_k(X,\I)\cong H_k(X\setminus Z_\alpha;\Z).
\]
The status of this statement should be made precise. Once Theorem \ref{thm:divcomp} is available, its proof is short: the valuative condition $\delta(c)\ge\alpha$ is equivalent to $|c|\cap Z_\alpha=\emptyset$. What the theorem asserts is that the apparently sophisticated valuative filtration has an intrinsic, resolution-free, purely topological description, and that this description is forced: any filtration of chains by a threshold depending only on the divisorial valuations that reach the support is a filtration by superlevel sets of the local threshold. This reduction is one of the conceptual results of the paper, and Section \ref{sec:valuative} is organized around it, with the structure theorem as its centre. Tameness, Mayer--Vietoris sequences and independence of the resolution are then formal, and so is the following limitation: when $V(\I)$ is a point, $Z_\alpha$ is either empty or that point, and the module has exactly two stages. Information about $X$ is not information about the resolution; the first module sees only the sets $Z_\alpha$, the second sees the combinatorial organization of all divisors. This distinction is central to the paper. The valuative module has content when the local threshold varies along $V(\I)$; hyperplane arrangements are the model example, and there $Z_\alpha$ is a union of flats and the groups are computed by the Goresky--MacPherson formula.

\subsection*{The filtered dual complex}
On the resolution itself we consider the dual complex $\Delta(\pi)$ of the simple normal crossings divisor $D_\pi=\Exc(\pi)\cup\Supp(\pi^{-1}\I)$, with the weight function $\gamma$ on its vertices, and its sublevel subcomplexes $\Delta(\pi)^{\le\alpha}$ spanned by the divisors of weight $\le\alpha$. Their homology
\[
\DPH^\alpha_k(X,\I):=H_k(\Delta(\pi)^{\le\alpha})
\]
is the divisorial persistent homology of the pair. Two log-resolutions have different dual complexes, so the question is whether the persistence modules agree. Theorem \ref{thm:blowup-inv} says that a single blow-up along a smooth centre compatible with $D_\pi$ either replaces $\Delta^{\le\alpha}$ by a stellar subdivision, or attaches to it a cone over a contractible subcomplex, or leaves it unchanged; which of the three happens is decided by the mediant inequality $\min_j\gamma_{E_j}\le\gamma_F\le\max_j\gamma_{E_j}$ for the exponent of the new divisor $F$. To pass from ``one blow-up'' to ``any two log-resolutions'' one needs that two log-resolutions be connected by a chain of such blow-ups and their inverses; this is a factorization statement, and we state it explicitly as Hypothesis \ref{hyp:WF} rather than deriving it from a common resolution, which gives strictly less. The hypothesis holds unconditionally for surface germs, and in the complex algebraic setting we expect it to follow from the relative form of weak factorization \cite{AKMW02,Wlo03} (Remark \ref{rem:WFstatus}); it is not claimed in the real analytic setting in dimension $\ge3$. For surface germs, where the invariance is elementary and the module is computed from the weighted dual graph alone (Theorem \ref{thm:surface}). We work out the $A_n$ rational double points with the actual Jacobian orders; the outcome is perhaps unexpected, since the weights are symmetric under the diagram automorphism and the first sublevel graph is disconnected.

\subsection*{Scope of the analytic statement}
The exponents $\gamma_E$ are not arbitrary weights: the volume of the energy sublevel set near a point $p$ decays like $\varepsilon^{\gamma_{\min}(p)}$ up to a logarithmic factor of exponent $m(p)-1$, where $\gamma_{\min}(p)=\tfrac12\rlct_p(\I)$ (Theorem \ref{thm:volume}). This is the analytic reason for the choice of weights; it is also what makes both $\gamma_{\min}(p)$ and the resonance multiplicity $m(p)$ invariants of the pair (Corollary \ref{cor:mp}). We stress that this statement is about $n$-dimensional volume on an $n$-dimensional space. It is not true that the $k$-dimensional Hausdorff measure of a $k$-chain inside the sublevel sets decays at the rate $\gamma_E$ of a divisor it meets (Remark \ref{rem:kchains}). Lower-dimensional decay rates are governed by $k$-dimensional Jacobians and are not part of this paper.

\subsection*{Relation between the two modules}
The two persistence modules share the finite set of divisorial weights $\Gamma_\pi(X,\I)=\{\gamma_E\}$ from which their filtration levels are drawn (not every weight need change the homology, as Example \ref{ex:An} shows), and their first critical value $\tfrac12\rlct(\I)$. They have opposite variance: the valuative module is indexed by superlevel sets $\{\delta\ge\alpha\}$ and is a module over $(\R_{>0},\ge)$, the dual module by sublevel complexes $\Delta^{\le\alpha}$ and is a module over $(\R_{>0},\le)$. We do not claim, and it is not true in general, that one is obtained from the other; Remark \ref{rem:different} records the difference on the simplest example, and Section \ref{sec:outlook} explains why any comparison must be a duality rather than a morphism. The thesis of the paper is the hierarchy
\[
\rlct(\I)\ \subset\ \Gamma_\pi(X,\I)\ \subset\ \bigl\{\text{valuative module},\ \text{filtered dual complex}\bigr\}:
\]
the threshold detects the first divisorial scale, the spectrum detects all critical scales, and the two persistence modules detect how those scales are organized, on $X$ and on the resolution respectively.

\subsection*{Relation to metric homology theories}
The use of chains in Section \ref{sec:valuative} might suggest a metric theory, in which chains are measured inside the sublevel sets $U_\varepsilon(\I)$ and their decay rates are read off from divisorial data, in the spirit of the metric homology of Birbrair--Brasselet \cite{BB00,BB02} or the vanishing homology of Valette \cite{Val10}. The present constructions are of a different nature: Remark \ref{rem:kchains} shows that the decay rate of the $k$-dimensional measure of a $k$-chain inside $U_\varepsilon(\I)$ is governed by $k$-dimensional Jacobians and not by the exponents $\gamma_E$, and Remark \ref{rem:different} shows that neither of the two modules constructed here determines the other. The valuative filtration is topological on $X$, the filtered dual complex is combinatorial on the resolution, and the exponents $\gamma_E$ enter both only through the total transform, never through a measure on chains. The relation between the present constructions and the metric theories just cited is left open (Section \ref{sec:outlook}).

\subsection*{Organization}
Section \ref{sec:pairs} sets up analytic pairs, log-resolutions, admissible models, divisorial valuations and their log discrepancies. Section \ref{sec:volume} proves the volume asymptotics of sublevel sets, the local version identifying $\tfrac12\rlct_p$ with a decay exponent, and the invariance of the resonance multiplicity. Section \ref{sec:valuative} constructs $\delta$ and proves the structure theorem and its consequences. Section \ref{sec:dual} introduces the filtered dual complex, proves its invariance under one blow-up, and states the factorization hypothesis under which this yields independence of the resolution. Section \ref{sec:surfaces} treats normal surface singularities and examples. Section \ref{sec:outlook} compares the two modules and lists open questions.

\section{Analytic pairs, admissible models and divisorial data}\label{sec:pairs}

\begin{convention}\label{conv:setting}
Throughout, $X$ is a reduced real analytic space of pure dimension $n$ (a representative of a germ, or a compact space; nothing below depends on this choice), $\I\subset\mathcal O_X$ is a coherent ideal sheaf with generators $f_1,\dots,f_r$ near any point, and
\[
K_\I=\sum_{j=1}^r|f_j|^2
\]
is the associated energy; its asymptotic equivalence class (Definition \ref{def:asymp}) depends only on $\I$ (Proposition \ref{prop:asymp}). We write $U_\varepsilon(\I)=\{K_\I\le\varepsilon\}$. Complex analytic pairs are treated in Convention \ref{conv:complex}.
\end{convention}

\begin{definition}\label{def:asymp}
Two non-negative functions $K_1,K_2$ are \emph{asymptotically equivalent}, $K_1\asymp K_2$, if $cK_1\le K_2\le CK_1$ near their common zero locus for some $c,C>0$.
\end{definition}

\begin{proposition}\label{prop:asymp}
If $f$ and $g$ are two generating systems of $\I$ near a point, then $K_f\asymp K_g$. Consequently $U_{c\varepsilon}(K_f)\subset U_\varepsilon(K_g)\subset U_{C\varepsilon}(K_f)$ for small $\varepsilon$.
\end{proposition}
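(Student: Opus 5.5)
The plan is to express each generating system in terms of the other with analytic coefficients, and then bound the sums of squares by Cauchy--Schwarz. Let $f=(f_1,\dots,f_r)$ and $g=(g_1,\dots,g_s)$ both generate $\I$ on a neighbourhood $U$ of $p$. Since each $g_i$ is a section of $\I$ and $f$ generates $\I$ near $p$, the stalk at $p$ gives germs $a_{ij}\in\mathcal O_{X,p}$ with $g_i=\sum_j a_{ij}f_j$. After shrinking $U$, these identities hold on a neighbourhood, and we replace it by a compact neighbourhood $V\ni p$ on which the finitely many $a_{ij}$ are defined and bounded, say $|a_{ij}|\le M$.

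Cauchy--Schwarz then gives
\[
K_g=\sum_i\Bigl|\sum_j a_{ij}f_j\Bigr|^2\le\sum_i\Bigl(\sum_j|a_{ij}|^2\Bigr)\Bigl(\sum_j|f_j|^2\Bigr)\le rsM^2\,K_f
\]
on $V$, so the upper bound holds with $C=rsM^2$. Running the same argument with $f$ and $g$ exchanged, writing $f_j=\sum_i b_{ji}g_i$, gives $K_f\le C'K_g$. Putting $c=1/C'$ yields $cK_f\le K_g\le CK_f$ on $V$. This holds on a whole neighbourhood of $p$, which is stronger than the required statement near the common zero locus $V(\I)\cap V$. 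Both energies vanish exactly on $V(\I)$, so the zero loci do coincide. In the complex case one uses $|\cdot|$ of complex numbers and the argument is the same.

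The sublevel inclusions follow directly. If $K_g\le\varepsilon$, then $K_f\le C'K_g\le C'\varepsilon$; if $K_f\le c\varepsilon$, then $K_g\le CK_f\le Cc\,\varepsilon$. Relabelling the constants gives $U_{c\varepsilon}(K_f)\subset U_\varepsilon(K_g)\subset U_{C\varepsilon}(K_f)$ inside $V$ for all $\varepsilon>0$. The qualifier ``for small $\varepsilon$'' is only needed if the sets are intersected with a fixed compact neighbourhood, or if one wants the sublevel sets to lie within a neighbourhood of $V(\I)$. For the latter, on a compact $V$ the function $K_f$ is bounded below by a positive constant away from any neighbourhood of $V(\I)$, so small sublevel sets concentrate near the zero locus.

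The only point that needs care, and the one I expect to be the main obstacle, is the uniformity of the coefficients. Coherence gives the relations $g_i=\sum_j a_{ij}f_j$ only on some neighbourhood of $p$, and the bound $M$ is only available after restricting to a relatively compact subset. The statement is local, so this is enough. For a compact $X$ one covers it by finitely many such neighbourhoods and takes the worst constants. If generating systems are only given locally, one compares them on overlaps in the same way.
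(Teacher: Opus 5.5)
Your proof is correct and is essentially the paper's argument: write $g=Af$ and $f=Bg$ with locally bounded analytic coefficient matrices, bound $\|g\|\le\|A\|\,\|f\|$ and $\|f\|\le\|B\|\,\|g\|$, and square (your entrywise Cauchy--Schwarz estimate is the same bound). One small point to make explicit: from $K_g\le CK_f$ you get $U_{\varepsilon/C}(K_f)\subset U_\varepsilon(K_g)$, so the constant in that inclusion is $1/C$, not the $c=1/C'$ of the two-sided estimate, and your ``relabelling'' should state this.
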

\begin{proof}
Write $g=Af$, $f=Bg$ with analytic matrices $A,B$ bounded near the point; then $\|g\|\le\|A\|\,\|f\|$ and $\|f\|\le\|B\|\,\|g\|$, and squaring gives the claim.
\end{proof}

A \emph{log-resolution} of $(X,\I)$ is a proper birational morphism $\pi\colon\Xt\to X$ from a real analytic manifold such that $\I\cdot\mathcal O_{\Xt}=\mathcal O_{\Xt}(-\sum_E\nu_EE)$ is principal and
\[
D_\pi:=\Exc(\pi)\cup\Supp(\pi^{-1}\I)
\]
is a simple normal crossings divisor. Log-resolutions exist by Hironaka \cite{Hir64}; canonical ones by Bierstone--Milman \cite{BM97}. Note that $D_\pi$ may contain non-exceptional components (strict transforms of components of $V(\I)$ of codimension one) and exceptional components with $\nu_E=0$ (divisors over $\operatorname{Sing}X$ outside $V(\I)$); both are kept.

\subsection{Admissible models}
The real analytic category does not come equipped with a ready-made birational geometry: for a reducible germ the local ring is not a domain, ``function field'' has no clean meaning, and the realization of abstract divisorial valuations by blow-ups is not a theorem one may quote. We therefore build, once and for all, the category of models we actually use.

\begin{definition}[Admissible models]\label{def:models}
An \emph{admissible blow-up} of a pair $(Y,D_Y)$, $Y$ a real analytic manifold and $D_Y$ a simple normal crossings divisor, is the blow-up $\rho\colon Y'\to Y$ along a smooth closed irreducible subset $Z\subset Y$ having simple normal crossings with $D_Y$; we set $D_{Y'}=\rho^{-1}(D_Y)\cup F$, $F$ the exceptional divisor. An \emph{admissible model} of $(X,\I)$ is a morphism $\mu\colon W\to X$ obtained from some log-resolution by a finite sequence of admissible blow-ups. Every admissible model is a log-resolution (Lemma \ref{lem:oneblowup} below), and every log-resolution is an admissible model with the empty sequence.
\end{definition}

Throughout the paper, ``model'' means admissible model.

\begin{definition}[Divisorial data]\label{def:divdata}
For a prime divisor $E\subset D_\pi$ let $\nu_E=\ord_E(\I)\ge0$ and $a_E=\ord_E(\Jac\pi)\ge0$, the order of vanishing along $E$ of the Jacobian of $\pi$, i.e.\ of the $n$-dimensional Jacobian of $\pi$ relative to the $n$-dimensional Hausdorff measure of $X$ (the Mather discrepancy of $\ord_E$ in the sense of \cite[Def.~3.1]{dFD14}; when $X$ is smooth this is the classical discrepancy). Set
\[
A_E=a_E+1,\qquad \lambda_E=\frac{A_E}{\nu_E},\qquad \gamma_E=\frac{A_E}{2\nu_E}=\frac{\lambda_E}{2},
\]
with $\lambda_E=\gamma_E=+\infty$ when $\nu_E=0$. The \emph{divisorial spectrum} of $\pi$ is the finite set $\Gamma_\pi(X,\I)=\{\gamma_E:E\subset D_\pi\}\subset(0,+\infty]$.
\end{definition}

\begin{remark}[On the Jacobian order for singular $X$]\label{rem:mather}
For $X$ singular, $a_E$ is the order of the ideal $\Jac_{\Xt/X}$ generated by the $n\times n$ minors of $D\pi$ in an embedding $X\subset\R^N$; equivalently, by the area formula, $\pi^*\Hn_X\asymp\prod_E|u_E|^{a_E}\,d\mathrm{Leb}$ in adapted coordinates. In particular $a_E\ge0$ always, since $\Jac_{\Xt/X}$ is an ideal of $\mathcal O_{\Xt}$; this is worth keeping in mind for Example \ref{ex:cusp}, where the classical discrepancy is negative. It coincides with the classical discrepancy $k_E(X)$ only where $X$ is smooth. For $X$ locally a complete intersection, \cite[Def.~3.1, Cor.~3.5]{dFD14} give $a_E=k_E(X)+\ord_E(j_X)$, where $j_X$ is the Jacobian ideal of $X$; for rational double points $k_E=0$ on the minimal resolution and $a_E=\ord_E(j_X)$. This is the convention used in all examples, and $\rlct$ below always means the threshold defined by it (which is the volume-theoretic threshold of singular learning theory for the Hausdorff measure on $X$).
\end{remark}

\begin{definition}[Divisorial valuations and log discrepancy]\label{def:valuation}
A \emph{divisorial valuation over $X$} is a valuation of the form $v=q\,\ord_E$, $q\in\Q_{>0}$, for $E$ a prime divisor of $D_W$ on some admissible model $\mu\colon W\to X$; two such data define the same valuation if they agree as functions on $\mathcal O_X$ near the centre. For $v=q\ord_E$ set
\[
v(\I)=q\,\nu_E,\qquad A(v)=q\,A_E=q(a_E+1),\qquad \gamma(v)=\frac{A(v)}{2\,v(\I)}\in(0,+\infty].
\]
$A(v)$ is the \emph{log discrepancy} of $v$ (Mather log discrepancy for singular $X$). Both $v(\I)$ and $A(v)$ are homogeneous of degree one in $q$, so $\gamma(qv)=\gamma(v)$; and both depend only on $v$, not on the model realizing it (Proposition \ref{prop:valinv}(ii)). The centre $c_Y(v)$ of $v$ on a model $Y$ is the closed irreducible subset cut out by the valuation ideal.
\end{definition}

The homogeneity is the reason for writing $A(v)$ rather than $a(v)+1$: with the latter, $\gamma$ would depend on the normalization $q$. This is also the notation of \cite{JM12,BFJ08}, where $\lct(\I)=\inf_v A(v)/v(\I)$; our $\gamma$ is one half of that function, restricted to divisorial valuations, and Theorem \ref{thm:volume} is the reason for the half.

\begin{proposition}[Valuative invariance]\label{prop:valinv}
Let $\pi$ be a log-resolution.
\begin{enumerate}[label=\textup{(\roman*)},leftmargin=2em]
\item $\Gamma_\pi(X,\I)$ is finite.
\item $\nu_E$ and $a_E$ depend only on the valuation $\ord_E$, not on the admissible model on which $E$ is a divisor.
\end{enumerate}
The independence of $\min\Gamma_\pi(X,\I)$ from $\pi$ will follow from Theorem \ref{thm:divcomp}; we then write $\tfrac12\rlct(\I):=\min\Gamma_\pi(X,\I)$, i.e.\ $\rlct(\I)=\min_{E\subset D_\pi}\lambda_E$.
\end{proposition}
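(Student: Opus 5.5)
Part (i) follows from properness and local finiteness. The divisor $D_\pi$ is a simple normal crossings divisor on the manifold $\Xt$, so its irreducible components form a locally finite family. When $X$ is a representative of a germ or a compact space, we may shrink to a relatively compact neighbourhood of the relevant compact set. Properness of $\pi$ makes the preimage compact, so only finitely many prime divisors $E\subset D_\pi$ meet it. Each such $E$ contributes a single value $\gamma_E\in(0,+\infty]$. The value is well defined because $\nu_E$ and $a_E$ are generic orders of vanishing along the irreducible $E$: they are constant on a dense open subset of $E$, which is irreducible. Hence $\Gamma_\pi(X,\I)$ is finite.

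For (ii), take two admissible models $\mu\colon W\to X$ and $\mu'\colon W'\to X$ carrying prime divisors $E\subset W$ and $E'\subset W'$ with $\ord_E=\ord_{E'}$ as functions on $\mathcal O_X$ near the centre.

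The claim for $\nu_E$ is immediate. By definition $\nu_E=\min_j\ord_E(f_j\circ\mu)$ for local generators $f_j$ of $\I$. This is the value of the valuation on the ideal, so it depends only on the function $\ord_E$ on $\mathcal O_X$.

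For $a_E$ we use a characterization through the valuation alone. Work in an embedding $X\subset\R^N$, with $x_1,\dots,x_N$ the coordinate functions. By Remark \ref{rem:mather}, $\Jac_{W/X}$ is generated by the $n\times n$ minors of $D\mu$, that is, by the coefficients of the pullbacks $\mu^*(dx_{i_1}\wedge\dots\wedge dx_{i_n})$ in local coordinates $(u_1,\dots,u_n)$ on $W$ adapted to $E=\{u_1=0\}$. The plan is to show that
\[
a_E=\min_{I}\ord_E\bigl(\mu^*dx_I\bigr),
\]
where the order of an $n$-form along $E$ is measured against a local generator $du_1\wedge\dots\wedge du_n$ of $\Omega^n_W$.

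This quantity is intrinsic to the valuation in the following sense. Choose a point $w\in E$ outside the other components and outside every exceptional locus. Near $w$, any two models realizing the same valuation are related by a bimeromorphic map $\phi=\mu'^{-1}\circ\mu$. The map $\phi$ is defined and biholomorphic (real-analytically bi-analytic) at the generic points of $E$ and $E'$, so it identifies them there, and it carries $du$-generators to $du'$-generators up to a unit. Consequently the orders of $\mu^*dx_I$ and $\mu'^*dx_I$ agree.

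The main obstacle is exactly this point: showing that two models inducing the same valuation are isomorphic over a generic point of the divisor. This is where the lack of a function field in the real analytic setting hurts. My first approach is to use the admissible-model structure, reducing to a common model. Both models come from log-resolutions by admissible blow-ups. Take their fibre product, or more concretely resolve the graph of $\phi$ by Hironaka. This yields a model $W''$ dominating both, on which the centre of $\ord_E$ is a divisor $E''$. Its image in $W$ is $E$: if it were a smaller centre, the valuation $\ord_{E''}$ would be a monomial valuation with value $\ge 2$ on some coordinate, or would differ from $\ord_E$ on a suitable function.

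Over the generic point of $E$ the map $W''\to W$ is a local isomorphism, since it is a proper bimeromorphic map whose restriction to $E''\to E$ is generically finite. It is then an isomorphism by Zariski's main theorem over the normal (smooth) $W$, applied generically via the local analytic version. The same holds for $W'$. The chain rule for Jacobians, $\Jac(\mu'')=\Jac(\mu)\cdot\Jac(W''\to W)$, with a unit second factor generically along $E''$, gives $a_{E''}=a_E=a_{E'}$.

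Reducible germs are handled componentwise: the valuation $\ord_E$ is supported on the single irreducible component of $X$ containing $\mu(E)$ generically. Homogeneity in $q$ then transports both statements to $v=q\ord_E$.
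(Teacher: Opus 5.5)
Your proposal is correct and follows essentially the paper's argument: you pass to a smooth model dominating both realizations, on which the valuation is carried by a divisor mapping birationally (hence isomorphically at its generic point) onto $E$ and onto $E'$, and conclude with the chain rule $\Jac_{W''/X}=\Jac_{W''/W}\cdot\rho^*\Jac_{W/X}$, whose first factor is a unit generically along that divisor. The differences are minor: you read off $\nu_E$ directly as $v(\I)$ instead of transporting it through the same generic isomorphism, and you build the dominating model by resolving the graph rather than quoting Proposition \ref{prop:domination}; this is harmless, since admissibility of the dominating model plays no role in (ii).
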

\begin{proof}
(i) $D_\pi$ has finitely many components. (ii) If $E\subset Y$ and $F\subset Y'$ define the same valuation, take an admissible model $W$ dominating both (Proposition \ref{prop:domination}) on which the valuation is realized by a divisor $G$; the induced morphisms $G\to E$ and $G\to F$ are proper and birational (both realize the same valuation), hence isomorphisms at the generic point of $G$. Hence $\ord_G(\rho^*E)=1$ and $\ord_G(\Jac_{W/Y})=0$ for $\rho\colon W\to Y$, and the chain rule $\Jac_{W/X}=\Jac_{W/Y}\cdot\rho^*\Jac_{Y/X}$ gives $\ord_G(\Jac_{W/X})=\ord_E(\Jac_{Y/X})$; similarly for $F$ and for the orders of $\I$. \end{proof}

Concerning the minimum: it is independent of $\pi$ by Theorem \ref{thm:divcomp} applied to the $0$-chains $\{p\}$ (whose proof does not use this fact), or alternatively by Theorem \ref{thm:volume}, whose left-hand side is defined without a resolution. The non-exceptional components of $D_\pi$ must be included: a reduced principal ideal has $\lambda=1$ on the strict transform of its zero locus, and this may be the minimum. In the complex algebraic case with $X$ smooth this is the classical resolution formula for the log canonical threshold \cite{Kol97,Mus12}; for real pairs and the Mather convention we rely on the two proofs just indicated and refer to \cite{Sai07} for a comparison of conventions.

\begin{definition}[Local threshold]\label{def:localthr}
For $p\in X$ set
\[
\gamma_{\min}(p):=\delta(\{p\})=\inf\{\gamma(v): v\text{ reaches }p\},\qquad \rlct_p(\I):=2\gamma_{\min}(p),
\]
with $\delta$ as in Definition \ref{def:delta} (which does not use this definition), and $\inf\emptyset=+\infty$. By Theorem \ref{thm:divcomp}, for any log-resolution $\pi$,
\[
\gamma_{\min}(p)=\min\{\gamma_E:E\subset D_\pi,\ p\in\pi(E)\},
\]
so that $\gamma_{\min}(p)=+\infty$ off $\pi(D_\pi)$, and also when every divisor over $p$ has $\nu=0$, i.e.\ $p\notin V(\I)$.
\end{definition}

\begin{convention}[Complex pairs]\label{conv:complex}
If $X$ is a complex analytic space of complex dimension $n$ and $\I$ a coherent ideal, we take a complex log-resolution, $\nu_E,a_E$ the complex orders, and we use the weight $\lambda_E=A_E/\nu_E$ instead of $\gamma_E$: the change of variables in Theorem \ref{thm:volume} then involves $|\det_\C D\pi|^2\asymp|u_E|^{2a_E}$ and integration over a complex disc, so the volume exponent of $\{K_\I\le\varepsilon\}$ is $\lambda_{\min}=\lct(\I)$ and not $\gamma_{\min}$. All incidence and homotopy arguments of Sections \ref{sec:valuative}--\ref{sec:surfaces} are insensitive to the common rescaling $\lambda_E=2\gamma_E$: only the numerical values of the critical levels change, not the combinatorics of the filtrations. Precisely, if a complex pair is regarded as a pair with weights $\lambda_E$, the resulting persistence modules are the modules built with $\gamma_E$ \emph{reparametrized} by $\alpha\mapsto2\alpha$; they are isomorphic as persistence modules only after this global rescaling of the persistence parameter, and not literally equal. Only the identification with volume growth (Theorem \ref{thm:volume}) distinguishes the two normalizations. A complex log-resolution is not a real log-resolution of the underlying real analytic space (its divisors have real codimension two), so the two settings are not to be mixed.
\end{convention}

\section{Volume asymptotics of energy sublevel sets}\label{sec:volume}

This section explains why the $\gamma_E$ are the natural weights. It is the only place where measure theory enters, and everything is stated for $n$-dimensional volume.

\begin{lemma}[Adapted coordinates]\label{lem:adapted}
Let $\pi$ be a log-resolution. Every point of $\Xt$ has a neighbourhood with real analytic coordinates $(u_1,\dots,u_\ell,w_1,\dots,w_{n-\ell})$ in which the components of $D_\pi$ through the point are $E_j=\{u_j=0\}$, and
\[
K_\I\circ\pi\asymp\prod_{j=1}^\ell|u_j|^{2\nu_{E_j}},\qquad \pi^*\Hn_X\asymp\prod_{j=1}^\ell|u_j|^{a_{E_j}}\,du\,dw
\]
uniformly on compact subsets.
\end{lemma}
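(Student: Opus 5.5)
The plan is to use the simple normal crossings property of $D_\pi$ to choose coordinates, then treat the two asymptotic statements separately: the first from principality of $\I\cdot\mathcal O_{\Xt}$, the second from the minors of $D\pi$ together with the area formula.

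\textbf{Coordinates.} Fix $x\in\Xt$. Because $D_\pi$ is simple normal crossings, there are real analytic coordinates $(u,w)$ centred at $x$ in which each component $E_j$ of $D_\pi$ through $x$ is $\{u_j=0\}$, for $j=1,\dots,\ell$. No other component of $D_\pi$ meets a small enough neighbourhood.

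\textbf{The energy.} The ideal $\I\cdot\mathcal O_{\Xt}=\mathcal O_{\Xt}(-\sum\nu_E E)$ is principal, and its generator near $x$ can be taken to be $\prod_j u_j^{\nu_{E_j}}$. Every component of $\Supp(\pi^{-1}\I)$ through $x$ lies in $D_\pi$, so it is one of the $E_j$, and no other factor appears. Consequently:
\begin{itemize}
\item each $f_i\circ\pi$ factors as $\prod_j u_j^{\nu_{E_j}}\,g_i$ with $g_i$ analytic;
\item the $g_i$ generate the unit ideal at $x$, so some $g_{i_0}(x)\neq0$.
\end{itemize}
Hence $K_\I\circ\pi=\prod_j|u_j|^{2\nu_{E_j}}\sum_i|g_i|^2$, and $\sum_i|g_i|^2$ is bounded above and below by positive constants near $x$. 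This gives the first $\asymp$. By Proposition \ref{prop:asymp}, the choice of generators does not matter.

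\textbf{The measure.} Embed $X\subset\R^N$. The area formula gives $\pi^*\Hn_X=J\,du\,dw$ on the dense open set where $\pi$ is a local diffeomorphism onto $X_{\mathrm{reg}}$. Here $J=\bigl(\sum_M M^2\bigr)^{1/2}$, where $M$ ranges over the $n\times n$ minors of $D\pi$ in the coordinates $(u,w)$. These minors generate $\Jac_{\Xt/X}$ (Remark \ref{rem:mather}).

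The key step, which I expect to be the main obstacle, is that $\Jac_{\Xt/X}$ is locally principal with support in $D_\pi$, equal to $\prod_j u_j^{a_{E_j}}$ times the unit ideal. Two facts are needed for this.
\begin{itemize}
\item Away from $\Exc(\pi)$ the map $\pi$ is an isomorphism onto the smooth locus, so the minors have no common zero there. Hence the cosupport of $\Jac_{\Xt/X}$ is contained in $\Exc(\pi)\subset D_\pi$.
\item The ideal must actually be principal; having cosupport in a normal crossings divisor is not enough on its own. If the given $\pi$ does not make it principal, I would either assume this as part of the log-resolution (it is standard that one may also principalize $\Jac$, and this is the Mather-discrepancy convention of \cite{dFD14}), or resolve the issue by an additional admissible blow-up.
\end{itemize}
Granting principality, the minors factor as $M=\prod_j u_j^{a_{E_j}}h_M$ with the $h_M$ generating the unit ideal. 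Then $J\asymp\prod_j|u_j|^{a_{E_j}}$, with $a_{E_j}=\ord_{E_j}\Jac$ by definition. Since the constants are locally uniform, a finite covering of any compact set by such charts gives uniformity on compact subsets.
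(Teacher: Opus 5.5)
Your treatment of $K_\I\circ\pi$ is the paper's own argument: principalization, with cofactors generating the unit ideal. For the measure estimate, the paper's proof only cites Definition \ref{def:divdata} and Remark \ref{rem:mather}. In effect it takes for granted the ``equivalently, by the area formula'' asserted in that remark. You have correctly located what this needs, namely $\Jac_{\Xt/X}=\prod_ju_j^{a_{E_j}}\cdot\mathcal O_{\Xt}$ near every point. But ``granting principality'' is a genuine gap, and for singular $X$ it cannot be filled, because the estimate is false for log-resolutions in the paper's sense. Take $X=\{xy=z^3\}$ (real or complex $A_2$), $\I=\mathfrak m$, and the minimal resolution; its chart at $E_1\cap E_2$ is $x=\xi^2\eta$, $y=\xi\eta^2$, $z=\xi\eta$. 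Here $\I\cdot\mathcal O_{\Xt}=(\xi\eta)$ and $D_\pi=E_1\cup E_2$ is simple normal crossings, so $\pi$ is a log-resolution. However, the $2\times2$ minors of $D\pi$ are $3\xi^2\eta^2$, $\xi^2\eta$, $-\xi\eta^2$. Hence $\Jac_{\Xt/X}=\xi\eta\cdot(\xi,\eta)$ is not principal ($\pi$ does not factor through the Nash blow-up). The area density is then $\asymp|\xi\eta|\,(\xi^2+\eta^2)^{1/2}$, which is not $\asymp|\xi|^{a}|\eta|^{b}$ for any exponents, in particular not for $a_1=a_2=1$ as in Example \ref{ex:An}.

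Neither of your remedies rescues the statement. Building principalization of $\Jac_{\Xt/X}$ into the notion of log-resolution is an added hypothesis, though it is the right one to add. An extra blow-up replaces $\pi$, while the lemma is about the given $\pi$. Even granting principality, the inference ``principal with cosupport in $D_\pi$, hence $\prod_ju_j^{a_{E_j}}$ times a unit'' is not valid in a real analytic local ring: $u_1^2+u_2^2$ has real zero set inside $\{u_1=0\}$. You need the complexified zero locus of the generator to lie in $D_\pi$. This holds, for instance, when $X$ is smooth and $\pi$ is a composite of smooth blow-ups, by the chain rule and $K_{Y'/Y}=(r-1)F$ as in Lemma \ref{lem:mediant}. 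The failure also matters downstream. In the real $A_2$ example, integrating the true density over both chart types gives $\mathcal H^2(B\cap U_\varepsilon(\I))\asymp\varepsilon$. Yet $E_1$ and $E_2$ both have $\gamma=1$ and meet, so Theorem \ref{thm:volume} as stated would predict $\varepsilon\LOG$.
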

\begin{proof}
Principalization gives $f_i\circ\pi=\prod_ju_j^{\nu_{E_j}}h_i$ with $h_i$ analytic and the $h_i$ without common real zero near the point, so $\sum|f_i\circ\pi|^2\asymp\prod|u_j|^{2\nu_{E_j}}$. The second estimate is Definition \ref{def:divdata} and Remark \ref{rem:mather}.
\end{proof}

\begin{lemma}[Monomial sublevel integral]\label{lem:monomial}
Let $\gamma_1,\dots,\gamma_r>0$, $\gamma_{\min}=\min_j\gamma_j$ attained $m$ times. Then, as $\varepsilon\to0^+$,
\[
J(\varepsilon)=\int_{[0,1]^r\cap\{\prod x_j\le\varepsilon\}}\prod_jx_j^{\gamma_j-1}\,dx\ \asymp\ \varepsilon^{\gamma_{\min}}\LOG^{m-1}.
\]
\end{lemma}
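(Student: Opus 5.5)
I will pass to logarithmic coordinates $x_j=e^{-t_j}$, $t_j\ge0$. Then $dx_j=x_j\,dt_j$, so $\prod_jx_j^{\gamma_j-1}\,dx=e^{-\sum_j\gamma_jt_j}\,dt$. With $L=\LOG$ the region becomes $\{t\in\R_{\ge0}^r:\sum_jt_j\ge L\}$, and therefore
\[
J(\varepsilon)=\int_{\{t\ge0,\ \sum t_j\ge L\}}e^{-\sum_j\gamma_jt_j}\,dt .
\]
This turns the question into a Laplace-type tail estimate for an exponential density over the complement of a simplex. Only two-sided bounds up to constants are required, since $\asymp$ is the relation of Definition \ref{def:asymp}.

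Order the indices so that $\gamma_1=\dots=\gamma_m=\gamma_{\min}$ and $\gamma_j\ge\gamma_{\min}+\eta$ for $j>m$, with some $\eta>0$. Write $s=t_1+\dots+t_m$ and $\tau=\sum_{j>m}t_j$.

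Integrating the first $m$ variables over the slice $\{t_1+\dots+t_m=s\}$ contributes the simplex area $s^{m-1}/(m-1)!$. This gives
\[
J=\frac1{(m-1)!}\int_{t''\ge0}e^{-\sum_{j>m}\gamma_jt_j}\int_{s\ge(L-\tau)_+}s^{m-1}e^{-\gamma_{\min}s}\,ds\,dt''.
\]

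For the lower bound, restrict to the box $t''\in[0,1]^{r-m}$ and to $s\ge L$. The inner integral is then $\ge c\,L^{m-1}e^{-\gamma_{\min}L}=c\,\varepsilon^{\gamma_{\min}}L^{m-1}$, which is the required bound.

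For the upper bound, use the elementary estimate $\int_a^\infty s^{m-1}e^{-\gamma s}ds\le C(1+a)^{m-1}e^{-\gamma a}$ with $a=(L-\tau)_+\le L$. This bounds the inner integral by $C(1+L)^{m-1}e^{-\gamma_{\min}(L-\tau)}$.

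The remaining outer integral is $\int e^{-\sum_{j>m}(\gamma_j-\gamma_{\min})t_j}\,dt''$. Its exponent is at most $-\eta\tau$, so the integral is finite. (On the part where $\tau>L$ the bound $e^{-\gamma_{\min}(L-\tau)}$ overestimates $1$, which is harmless for an upper bound.) Altogether this yields $J\le C'\varepsilon^{\gamma_{\min}}L^{m-1}$ for $\varepsilon$ small. The case $m=r$ is the same argument with no outer variables.

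The only delicate point is the upper bound, specifically that the non-minimal variables cannot enlarge the logarithmic power. This is exactly where the strict gap $\gamma_j-\gamma_{\min}\ge\eta>0$ is used: it makes the outer integral converge uniformly in $L$.

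Should the slice computation be awkward, I would instead argue by induction on $r$. Integrate out $x_r$ first, using $\int_0^{\min(1,\varepsilon/P)}x^{\gamma_r-1}dx$ with $P=\prod_{j<r}x_j$, and then split the remaining integral into the regions $P\le\varepsilon$ and $P>\varepsilon$. This reproduces the known one-step recursion, in which equal exponents raise the power of $\LOG$ by one and unequal exponents do not.
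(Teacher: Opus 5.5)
Your proof is correct, and it takes a genuinely different route from the paper's.

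\textbf{The paper's route.} It argues by induction on $r$. It orders the variables so that $\gamma_r$ is the largest exponent and writes $J(\varepsilon)=\int_0^1x_r^{\gamma_r-1}J_{r-1}(\varepsilon/x_r)\,dx_r$. It then splits at $x_r=\varepsilon$ and substitutes $x_r=e^{-w}$ in that single variable. The logarithmic power is thus tracked one variable at a time: a tie with the minimum raises it by one, and a strict gap leaves it unchanged.

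\textbf{Your route.} You pass to logarithmic coordinates in all variables at once and integrate the $m$ minimal variables together. The factor $L^{m-1}$ then appears directly as the $(m-1)$-dimensional simplex volume $s^{m-1}/(m-1)!$ at $s\approx L$. The non-minimal variables contribute only the bounded factor $\prod_{j>m}(\gamma_j-\gamma_{\min})^{-1}$.

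\textbf{What each buys.} Your argument is non-inductive and gives explicit constants, for example the lower constant $e^{-\sum_{j>m}\gamma_j}/((m-1)!\,\gamma_{\min})$. It also shows transparently that the strict gap $\gamma_j>\gamma_{\min}$ is exactly what stops the extra variables from raising the logarithmic power.

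It also sidesteps a small uniformity point in the inductive version. There the estimate $J_{r-1}(\delta)\asymp\delta^{\gamma_{\min}}|\log\delta|^{m'-1}$ is used on all of $(0,1)$. When $m'\ge2$ this fails near $\delta=1$, because the right side tends to $0$ while $J_{r-1}(\delta)$ does not; one must use $(1+|\log\delta|)^{m'-1}$ there instead. This does not change the conclusion.

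The paper's recursion, for its part, is shorter to write and reuses one one-variable computation at each step. Your fallback induction is not needed.
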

\begin{proof}
Induction on $r$; order the variables so that $\gamma_r=\max_j\gamma_j$. For $r=1$, $J=\varepsilon^{\gamma_1}/\gamma_1$. For $r\ge2$ integrate first in $x_1,\dots,x_{r-1}$: with $J_{r-1}$ the analogous integral, $J(\varepsilon)=\int_0^1x_r^{\gamma_r-1}J_{r-1}(\varepsilon/x_r)\,dx_r$, where $J_{r-1}(\delta)=A$ is constant for $\delta\ge1$ and $J_{r-1}(\delta)\asymp\delta^{\gamma'_{\min}}|\log\delta|^{m'-1}$ for $\delta<1$ by induction ($\gamma'_{\min}=\gamma_{\min}$ since $\gamma_r$ is a maximum). Splitting at $x_r=\varepsilon$ and substituting $x_r=e^{-w}$, $L=\LOG$,
\[
J(\varepsilon)\asymp\varepsilon^{\gamma_r}+\varepsilon^{\gamma_{\min}}\int_0^Le^{-(\gamma_r-\gamma_{\min})w}(L-w)^{m'-1}\,dw.
\]
If $\gamma_r>\gamma_{\min}$ the integral is $\asymp L^{m'-1}$, $m=m'$, and $\varepsilon^{\gamma_r}=o(\varepsilon^{\gamma_{\min}})$. If $\gamma_r=\gamma_{\min}$ the integral equals $L^{m'}/m'$, $m=m'+1\ge2$, and the term $\varepsilon^{\gamma_r}=\varepsilon^{\gamma_{\min}}$ is absorbed because $L^{m-1}\to\infty$. In both cases $J\asymp\varepsilon^{\gamma_{\min}}L^{m-1}$.
\end{proof}

\begin{theorem}[Volume asymptotics]\label{thm:volume}
Let $(X,\I)$ be an analytic pair, $p\in V(\I)$, and $B$ a compact neighbourhood of $p$ in $X$ small enough that $\gamma_{\min}(q)\ge\gamma_{\min}(p)$ for $q\in B$ (such $B$ exist because $\gamma_{\min}$ takes finitely many values, all in $\Gamma_\pi(X,\I)\cup\{+\infty\}$, and $\{\gamma_{\min}<\alpha\}=\bigcup_{\gamma_E<\alpha}\pi(E)$ is closed since $\pi$ is proper; see Corollary \ref{cor:semicont}, whose proof does not use this theorem). Then, as $\varepsilon\to0^+$,
\[
\Hn\bigl(B\cap U_\varepsilon(\I)\bigr)\asymp\varepsilon^{\gamma_{\min}(p)}\LOG^{m(p)-1},
\]
where $m(p)\ge1$ is the largest number of components of $D_\pi$ of weight $\gamma_{\min}(p)$ passing through a common point of $\pi^{-1}(p)$. In particular
\[
\gamma_{\min}(p)=\tfrac12\rlct_p(\I)=\sup\{\alpha:\Hn(B\cap U_\varepsilon)=O(\varepsilon^\alpha)\},
\]
and $\rlct(\I)$ is the volume-growth exponent of $U_\varepsilon(\I)$ in the sense of singular learning theory \cite{Wat09}.
\end{theorem}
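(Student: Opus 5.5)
The plan is to pull the volume back to $\Xt$, localize with a finite partition of unity on the compact set $\pi^{-1}(B)$, and reduce each local piece to the monomial integral of Lemma \ref{lem:monomial}.

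\textbf{Step 1: pull back.} Since $\pi$ is proper and birational, it is an isomorphism off a nowhere dense analytic set, which has $\Hn$-measure zero. Hence
\[
\Hn\bigl(B\cap U_\varepsilon(\I)\bigr)=\int_{\pi^{-1}(B)\cap\{K_\I\circ\pi\le\varepsilon\}}\pi^*\Hn_X .
\]
The set $\pi^{-1}(B)$ is compact, so it is covered by finitely many charts of adapted coordinates as in Lemma \ref{lem:adapted}. I would shrink each chart to a box $\{|u_j|\le\rho,\ |w|\le\rho\}$. On each box,
\[
K_\I\circ\pi\asymp\prod_j|u_j|^{2\nu_j},\qquad \pi^*\Hn\asymp\prod_j|u_j|^{a_j}\,du\,dw .
\]

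\textbf{Step 2: the upper bound.} Fix one box. The components with $\nu_j=0$ contribute a bounded factor and can be discarded. For the others, substitute $x_j=|u_j|^{2\nu_j}$, so that
\[
|u_j|^{a_j}\,du_j\asymp x_j^{\gamma_j-1}\,dx_j,\qquad \gamma_j=\frac{a_j+1}{2\nu_j}.
\]
Up to constants, and after rescaling so that the box becomes $[0,1]^r$, the piece becomes the integral $J(c\varepsilon)$ of Lemma \ref{lem:monomial}. Changing $\varepsilon$ by a constant factor does not alter the asymptotic class. Each box therefore contributes $\asymp\varepsilon^{\gamma'}\LOG^{m'-1}$, where $\gamma'$ is the minimal weight of the components of $D_\pi$ through the box centre and $m'$ is its multiplicity there.

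It remains to compare $\gamma'$ with $\gamma_{\min}(p)$. A component $E$ meeting $\pi^{-1}(B)$ with $\nu_E>0$ satisfies $\pi(E)\ni q$ for some $q\in B$. By Definition \ref{def:localthr}, $\gamma_E\ge\gamma_{\min}(q)$, and by the choice of $B$, $\gamma_{\min}(q)\ge\gamma_{\min}(p)$. So $\gamma'\ge\gamma_{\min}(p)$. If equality holds, then $m'\le m(p)$, provided the box lies near a point of $\pi^{-1}(p)$. Boxes away from $\pi^{-1}(p)$ need care: the weight-$\gamma_{\min}(p)$ components they meet map onto sets $\pi(E)$ containing $p$ only if they touch $\pi^{-1}(p)$. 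I would handle this by shrinking $B$ so that every component of weight $\gamma_{\min}(p)$ meeting $\pi^{-1}(B)$ meets $\pi^{-1}(p)$. This is possible because $\pi(E)$ is closed, so a component whose image misses $p$ stays at positive distance from it. I would also shrink $B$ so that intersection patterns of such components near $\pi^{-1}(p)$ are realized at points of $\pi^{-1}(p)$, using properness and closedness of the strata.

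\textbf{Step 3: the lower bound.} By Definition \ref{def:localthr} and Theorem \ref{thm:divcomp}, there is a point $\tilde p\in\pi^{-1}(p)$ lying on $m(p)$ components of weight $\gamma_{\min}(p)$. I take one adapted box around $\tilde p$ contained in $\pi^{-1}(\operatorname{int}B)$. Its contribution alone is $\gtrsim\varepsilon^{\gamma_{\min}(p)}\LOG^{m(p)-1}$, again by Lemma \ref{lem:monomial}.

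Summing finitely many upper bounds and combining with this lower bound gives the stated asymptotic. The identity $\gamma_{\min}(p)=\sup\{\alpha:\Hn(B\cap U_\varepsilon)=O(\varepsilon^\alpha)\}$ follows immediately, because a logarithmic factor does not change the exponent.

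\textbf{Main obstacle.} The delicate step is the uniformity in Step 2. The constants in Lemma \ref{lem:adapted} must be uniform on boxes, which is why they are taken compact. More seriously, boxes near the boundary of $\pi^{-1}(B)$ could see weight-$\gamma_{\min}(p)$ crossings with multiplicity exceeding $m(p)$. To rule this out I would first try a stratification-by-closedness argument: the set of points of $\Xt$ where at least $m(p)+1$ minimal-weight components meet is a closed analytic set. Its image under $\pi$ is closed, and it misses $p$ by the definition of $m(p)$. So shrinking $B$ avoids it.
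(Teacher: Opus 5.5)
Your proposal is correct and follows the paper's own route: pull back to $\Xt$, cover the compact set $\pi^{-1}(B)$ by finitely many adapted charts, apply Lemma~\ref{lem:monomial} in each chart, use the hypothesis on $B$ to get $\min_j\gamma_j\ge\gamma_{\min}(p)$ in every chart, and take a chart at a point of $\pi^{-1}(p)$ realizing $m(p)$ for the lower bound. Your additional shrinking of $B$ away from the closed set $\pi\bigl(\{\text{points where at least } m(p)+1 \text{ components of weight } \gamma_{\min}(p) \text{ meet}\}\bigr)$ is genuinely needed and is missing from the paper, whose proof simply asserts that the charts through $\pi^{-1}(p)$ carry the largest resonance multiplicity; under only the stated condition on $B$ the logarithmic exponent can be wrong (for $\I=(y(x-1))$ on $\R^2$, $p=0$ and $B$ the closed disc of radius $2$, one has $m(0)=1$, but the crossing at $(1,0)$ makes the volume $\asymp\varepsilon^{1/2}\LOG$).
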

\begin{proof}
$\pi^{-1}(B)$ is compact and covered by finitely many charts of Lemma \ref{lem:adapted}. In a chart meeting $E_1,\dots,E_\ell$ the volume of $\{K_\I\circ\pi\le\varepsilon\}$ is $\asymp\int_{\prod|u_j|^{2\nu_j}\le\varepsilon}\prod|u_j|^{a_j}\,du$, which after $x_j=|u_j|^{2\nu_j}$, $|u_j|^{a_j}du_j\propto x_j^{(a_j+1)/(2\nu_j)-1}dx_j=x_j^{\gamma_j-1}dx_j$, becomes the integral of Lemma \ref{lem:monomial} with exponents $\gamma_j$ (divisors with $\nu_j=0$ impose no constraint and only contribute bounded factors). Each chart contributes $\asymp\varepsilon^{\min_j\gamma_j}\LOG^{m_{\mathrm{chart}}-1}$. Now, if a divisor $E$ meets $\pi^{-1}(B)$ then $\pi(E)\cap B\neq\emptyset$, so some $q\in B$ has $\gamma_{\min}(q)\le\gamma_E$, whence $\gamma_E\ge\gamma_{\min}(p)$ by the hypothesis on $B$: every chart has $\min_j\gamma_j\ge\gamma_{\min}(p)$. The charts through $\pi^{-1}(p)$ realizing $\gamma_{\min}(p)$ dominate, with the largest resonance multiplicity, and there is at least one such chart. Summing finitely many terms gives the two-sided estimate. The last assertion is immediate from it.
\end{proof}

\begin{corollary}[Semicontinuity of the local threshold]\label{cor:semicont}
For every $\alpha>0$ the set $Z_\alpha:=\{p\in X:\gamma_{\min}(p)<\alpha\}$ equals $\bigcup_{\gamma_E<\alpha}\pi(E)$ and is a closed subanalytic subset of $X$ contained in $V(\I)$; $\gamma_{\min}$ is lower semicontinuous, and takes finitely many values, all in $\Gamma_\pi(X,\I)\cup\{+\infty\}$.
\end{corollary}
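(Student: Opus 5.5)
The plan is to derive everything from the formula of Definition~\ref{def:localthr}: for any log-resolution $\pi$,
\[
\gamma_{\min}(p)=\min\{\gamma_E:E\subset D_\pi,\ p\in\pi(E)\}.
\]
This formula comes from Theorem~\ref{thm:divcomp} applied to $0$-chains, and, as stated there, its proof does not use Theorem~\ref{thm:volume}, so there is no circularity.

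First, the set identity. By the formula, $\gamma_{\min}(p)<\alpha$ holds iff some $E\subset D_\pi$ with $\gamma_E<\alpha$ satisfies $p\in\pi(E)$. This gives
\[
Z_\alpha=\bigcup_{\gamma_E<\alpha}\pi(E),
\]
a finite union because $D_\pi$ has finitely many components (Proposition~\ref{prop:valinv}(i)).

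Next, the properties of $Z_\alpha$.
\begin{itemize}
\item \emph{Closed.} Each $E$ is closed in $\Xt$ and $\pi$ is proper, so $\pi(E)$ is closed. A finite union of closed sets is closed.
\item \emph{Subanalytic.} $\pi(E)$ is the proper image of an analytic set, hence subanalytic by Hironaka/Gabrielov. Finite unions of subanalytic sets are subanalytic.
\item \emph{Contained in $V(\I)$.} If $\gamma_E<\alpha<\infty$ then $\nu_E>0$, so every function of $\I$ pulls back to something vanishing on $E$. Hence $\I$ vanishes on $\pi(E)$, i.e.\ $\pi(E)\subset V(\I)$.
\end{itemize}

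Then the finiteness of values. The minimum in the formula is taken over a subset of $\Gamma_\pi(X,\I)$, and an empty minimum gives $+\infty$. So $\gamma_{\min}$ takes values in the finite set $\Gamma_\pi(X,\I)\cup\{+\infty\}$.

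Finally, lower semicontinuity. This is equivalent to the sublevel sets $\{\gamma_{\min}\le\beta\}$ being closed for every $\beta$. Since $\gamma_{\min}$ takes finitely many values, $\{\gamma_{\min}\le\beta\}=Z_{\beta'}$ for any $\beta'$ strictly between $\beta$ and the next value of $\Gamma_\pi$ above $\beta$. Alternatively, $\{\gamma_{\min}\le\beta\}=\bigcup_{\gamma_E\le\beta}\pi(E)$ directly. Either way it is closed by the previous step.

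The only delicate point is the use of Theorem~\ref{thm:divcomp}: one must make sure that its proof for $0$-chains relies only on Proposition~\ref{prop:valinv} and the behaviour of divisors under admissible blow-ups, not on this corollary or on Theorem~\ref{thm:volume}. This is what the paper asserts. Everything else is formal once properness of $\pi$ and finiteness of $D_\pi$ are invoked.
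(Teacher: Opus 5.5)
Your proposal is correct and follows the paper's own argument: the set identity comes from the resolution formula for $\gamma_{\min}(p)$ supplied by Theorem~\ref{thm:divcomp}, closedness comes from properness of $\pi$, and $\pi(E)\subset V(\I)$ holds because $\gamma_E<\alpha$ forces $\nu_E>0$. You also spell out subanalyticity (proper images of analytic sets) and lower semicontinuity via $\{\gamma_{\min}\le\beta\}=\bigcup_{\gamma_E\le\beta}\pi(E)$, which the paper leaves implicit.
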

\begin{proof}
$\gamma_{\min}(p)<\alpha$ iff some $E$ with $\gamma_E<\alpha$ has $p\in\pi(E)$; $\pi(E)$ is closed since $\pi$ is proper, and $\pi(E)\subset V(\I)$ when $\nu_E>0$.
\end{proof}

\begin{corollary}[The resonance multiplicity is intrinsic]\label{cor:mp}
The integer $m(p)$ of Theorem \ref{thm:volume} does not depend on the log-resolution: it is the exponent of the logarithmic correction to the volume of $B\cap U_\varepsilon(\I)$, and $\varepsilon\mapsto\Hn(B\cap U_\varepsilon(\I))$ is defined without any resolution. If $\Phi\colon X\to Y$ is a bi-Lipschitz subanalytic homeomorphism with $K_\J\circ\Phi\asymp K_\I$, then $\gamma_{\min}(\Phi(p))=\gamma_{\min}(p)$ and $m(\Phi(p))=m(p)$ for all $p$.
\end{corollary}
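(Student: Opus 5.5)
The plan is to read both invariants off the single function $V_B(\varepsilon):=\Hn(B\cap U_\varepsilon(\I))$, which involves no resolution. Theorem \ref{thm:volume} gives, for any log-resolution $\pi$, the two-sided estimate $V_B(\varepsilon)\asymp\varepsilon^{\gamma_{\min}(p)}\LOG^{m_\pi(p)-1}$, where $m_\pi(p)$ is the multiplicity computed on $\pi$.

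\textbf{Independence of $m(p)$.} Suppose two log-resolutions $\pi,\pi'$ give pairs $(\gamma,m)$ and $(\gamma',m')$. Then
\[
\varepsilon^{\gamma}\LOG^{m-1}\asymp\varepsilon^{\gamma'}\LOG^{m'-1}
\]
as $\varepsilon\to0^+$. The ratio is $\varepsilon^{\gamma-\gamma'}\LOG^{m-m'}$, and it is bounded above and below only if $\gamma=\gamma'$ and $m=m'$. If $\gamma\ne\gamma'$, the power of $\varepsilon$ dominates any power of the logarithm. If $\gamma=\gamma'$, a nonzero power of $\LOG$ tends to $0$ or $\infty$. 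This is the whole argument. One must only make sure $B$ is admissible for both resolutions. The condition on $B$ in Theorem \ref{thm:volume} is stated through $\gamma_{\min}$, which is intrinsic (Definition \ref{def:localthr}). The estimate also does not depend on which such $B$ is used: for $B'\subset B$, both are controlled by the same charts near $\pi^{-1}(p)$, which carry the leading term.

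\textbf{Invariance under $\Phi$.} Let $\Phi$ be bi-Lipschitz with constant $L$. Then $\Phi(B)$ is a compact neighbourhood of $\Phi(p)$, and $\Phi$ distorts $\Hn$ by at most the factor $L^n$. Since $K_\J\circ\Phi\asymp K_\I$ near the zero locus, there are constants $c,C>0$ with
\[
\Phi\bigl(B\cap U_{c\varepsilon}(\I)\bigr)\subset\Phi(B)\cap U_\varepsilon(\J)\subset\Phi\bigl(B\cap U_{C\varepsilon}(\I)\bigr)
\]
for small $\varepsilon$, as in Proposition \ref{prop:asymp}. Rescaling $\varepsilon$ by a constant does not change the asymptotic class, because $\bigl|\log(c\varepsilon)\bigr|\sim\LOG$. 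Hence
\[
\Hn\bigl(\Phi(B)\cap U_\varepsilon(\J)\bigr)\asymp\varepsilon^{\gamma_{\min}(p)}\LOG^{m(p)-1}.
\]

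\textbf{Main obstacle.} To compare this with Theorem \ref{thm:volume} at $\Phi(p)$, $\Phi(B)$ must be admissible at $\Phi(p)$, i.e.\ $\gamma_{\min}\ge\gamma_{\min}(\Phi(p))$ on it. This need not hold for the given $B$, since the local thresholds on $Y$ are not yet known to match. I would avoid the issue by shrinking. Choose an admissible $B'\ni\Phi(p)$ on $Y$ and an admissible $B''\subset\Phi^{-1}(B')$ around $p$. The two volume functions for $\Phi(B'')\subset B'$ are then squeezed between each other up to $\asymp$. Here monotonicity of volume gives one inequality, and the other comes from the leading term already being realised on arbitrarily small neighbourhoods, which follows from the chart argument in Theorem \ref{thm:volume}. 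Equating exponents then gives $\gamma_{\min}(\Phi(p))=\gamma_{\min}(p)$ and $m(\Phi(p))=m(p)$.
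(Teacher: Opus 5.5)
Your proposal is correct and follows the paper's own argument. Both invariants are read off from the resolution-free function $\varepsilon\mapsto\Hn(B\cap U_\varepsilon(\I))$, and for $\Phi$ the bi-Lipschitz distortion of $\Hn$, together with the sandwich of sublevel sets coming from $K_\J\circ\Phi\asymp K_\I$, transfers the two-sided estimate. Your extra care in choosing a neighbourhood admissible at both $p$ and $\Phi(p)$ addresses a point the paper passes over. It can be settled more simply: any compact neighbourhood inside an admissible one is itself admissible, so Theorem \ref{thm:volume} applies directly to $\Phi(B'')$, and the final squeeze against $B'$ is unnecessary.
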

\begin{proof}
The first statement is a restatement of the theorem. For the second, $\Phi$ distorts $\Hn$ by bounded factors and maps $U_\varepsilon(\I)$ into $U_{C\varepsilon}(\J)$ and $U_{c\varepsilon}(\J)$ into it, so the two-sided estimates for the two pairs agree, and both the exponent and the logarithmic order are read off from them.
\end{proof}

The pair $(\gamma_{\min}(p),m(p))$ is thus a two-parameter invariant of the germ of $(X,\I)$ at $p$ obtained directly from the volume asymptotics; for hyperplane arrangements it is the pair (threshold, multiplicity) of \cite{KW26}.

\begin{remark}[Why not $k$-chains]\label{rem:kchains}
Theorem \ref{thm:volume} does not extend to the $k$-dimensional Hausdorff measure of $k$-dimensional subsets for $k<n$. In adapted coordinates the density $\prod|u_j|^{a_j}$ is the $n$-dimensional Jacobian of $\pi$; the $k$-dimensional Jacobian along a $k$-plane is the product of $k$ singular values of $D\pi$ and can vanish to much lower order. For $X=\R^2$, $\I=(x,y)$: the blow-up has one divisor with $\nu=1$, $a=1$, $\gamma=1$, and $\mathcal H^2(U_\varepsilon)=\pi\varepsilon$ as predicted; but a segment through the origin has $\mathcal H^1(\text{segment}\cap U_\varepsilon)=2\sqrt\varepsilon$, since $\pi$ is an isometry along the strict transform of the segment. In general a $k$-plane through the origin decays like $\varepsilon^{k/2}$. Any statement relating $\gamma_E$ to decay rates of lower-dimensional chains would need the $k$-Jacobian orders $a^{(k)}_E\le a_E$, and is not made in this paper.
\end{remark}

\begin{remark}[Hyperplane arrangements]\label{rem:arrangements}
For $\I=(f)$ with $f=\prod_iL_i^{s_i}$ a product of linear forms on $\R^n$, a wonderful log-resolution has one divisor $D_W$ for each flat $W$ of the intersection lattice, with $\nu_{D_W}=s(W)=\sum_{W\subseteq H_i}s_i$, $a_{D_W}=\codim W-1$, hence $\gamma_{D_W}=\codim W/(2s(W))$, and $\pi(D_W)=W$. So $\gamma_{\min}(p)=\min\{\codim W/(2s(W)):p\in W\}$, $Z_\alpha$ is the union of the flats with $\codim W/(2s(W))<\alpha$, and, since on the wonderful model $D_W\cap D_{W'}\neq\emptyset$ iff $W,W'$ are comparable, $m(p)$ is the length of the longest chain of flats through $p$ attaining the minimum, recovering the pair (threshold, multiplicity) of \cite{KW26}.
\end{remark}

\section{The valuative filtration of chains}\label{sec:valuative}

\subsection{Chains, reach, and the admissibility threshold}
All chains are compact subanalytic singular chains in the sense of Hardt \cite{Har75} (a simplex $\sigma\colon\Delta^k\to X$ is subanalytic if its graph is); $C_k(X)$ denotes the free abelian group on such simplices, and $|c|$ the union of the images of the simplices occurring in $c$ with non-zero coefficient. The inclusion $C_\bullet(X)\subset C^{\mathrm{sing}}_\bullet(X)$ is a quasi-isomorphism \cite{Har75,vdD98}, and the same holds for every open subanalytic subset of $X$; barycentric subdivision preserves supports.

\begin{definition}[Reach]\label{def:reach}
A divisorial valuation $v$ over $X$ \emph{reaches} a chain $c$ if for some (equivalently, by Lemma \ref{lem:reach}, every) admissible model $\mu\colon W\to X$ on which $v$ is realized by a prime divisor $F$, one has $F\cap\mu^{-1}(|c|)\neq\emptyset$.
\end{definition}

\begin{lemma}\label{lem:reach}
The reach relation does not depend on the realizing model.
\end{lemma}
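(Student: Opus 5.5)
The plan is to reduce to a common dominating model, using the domination result Proposition \ref{prop:domination} that is already invoked in the proof of Proposition \ref{prop:valinv}(ii). Suppose $v$ is realized by $F\subset D_W$ on $\mu\colon W\to X$ and by $F'\subset D_{W'}$ on $\mu'\colon W'\to X$. We choose an admissible model $\mu''\colon W''\to X$ that dominates both, via $\rho\colon W''\to W$ and $\rho'\colon W''\to W'$ with $\mu''=\mu\circ\rho=\mu'\circ\rho'$, and on which $v$ is realized by a prime divisor $G$. As in the proof of Proposition \ref{prop:valinv}(ii), $\rho|_G\colon G\to F$ and $\rho'|_G\colon G\to F'$ are proper and birational onto their images. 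Since $G$ and $F$ define the same valuation, the centre of $\ord_G$ on $W$ is $F$, so $\rho(G)=F$, and likewise $\rho'(G)=F'$. It therefore suffices to prove, for a single dominating map $\rho\colon W''\to W$ with $\rho(G)=F$, that
\[
F\cap\mu^{-1}(|c|)\neq\emptyset\iff G\cap\mu''^{-1}(|c|)\neq\emptyset .
\]
Applying this twice gives the statement.

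For the direction ($\Leftarrow$), take $y\in G$ with $\mu''(y)\in|c|$. Then $\rho(y)\in F$ and $\mu(\rho(y))=\mu''(y)\in|c|$, so $\rho(y)\in F\cap\mu^{-1}(|c|)$.

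For the direction ($\Rightarrow$), take $x\in F$ with $\mu(x)\in|c|$. Because $\rho(G)=F$ (surjectivity, since $\rho|_G$ is proper and dominant onto the irreducible $F$, so its image is closed and contains a dense subset), there is $y\in G$ with $\rho(y)=x$. Then $\mu''(y)=\mu(x)\in|c|$.

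The only real obstacle is justifying $\rho(G)=F$ in the real analytic setting, where the image of a proper map from $G$ is closed but "dominant" must be phrased carefully. I would argue it concretely. The dominating model arises from $W$ by admissible blow-ups, so $\rho$ is a composition of blow-ups along smooth centres. Hence $\rho$ is an isomorphism over the complement of a proper analytic subset $T\subset W$ not containing $F$ (otherwise $\ord_F$ would not be the valuation of a divisor on $W''$ agreeing with $G$). Then $G$ is the strict transform of $F$, whose image $\rho(G)$ is closed, by properness, and contains $F\setminus T$, which is dense in $F$ since $F$ is irreducible. So $\rho(G)=F$. If instead the dominating model is only known to exist abstractly, I would use that $G$ and $F$ induce the same valuation, so the generic point of $G$ maps into $F$ and not into a smaller set, and conclude in the same way.
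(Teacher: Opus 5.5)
Your proof is correct and is essentially the paper's argument: pass to a common dominating admissible model (Proposition \ref{prop:domination}) on which $v$ is realized by $G$, use that $G\to F$ and $G\to F'$ are surjective because they are proper and birational, and check the two directions, one trivially and the other by lifting a point through surjectivity. One caveat: Proposition \ref{prop:domination} makes only one of the two maps a composition of admissible blow-ups. Your strict-transform argument therefore covers $\rho$, while for $\rho'$ you must rely on your fallback, which justifies surjectivity at the same level of detail as the paper does.
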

\begin{proof}
If $v$ is realized by $F\subset W$ and $F'\subset W'$, take an admissible model $Y$ dominating both (Proposition \ref{prop:domination}) on which $v$ is realized by $G$. The maps $G\to F$ and $G\to F'$ are surjective (they are proper and birational, being restrictions of proper birational morphisms to the closures of the centres of $v$). Since inverse images compose, $G\cap(\mu\circ\rho)^{-1}(|c|)\neq\emptyset$ iff $F\cap\mu^{-1}(|c|)\neq\emptyset$: the direction $\Rightarrow$ is immediate, and for $\Leftarrow$ a point $x\in F\cap\mu^{-1}(|c|)$ has a preimage in $G$ by surjectivity. Similarly for $F'$.
\end{proof}

\begin{definition}[Admissibility threshold]\label{def:delta}
For a chain $c$,
\[
\delta(c):=\inf\{\gamma(v):v\text{ divisorial valuation reaching }c\}\in(0,+\infty],\qquad\inf\emptyset=+\infty.
\]
For $\alpha>0$, $C^\alpha_k(X,\I):=\{c\in C_k(X):\delta(c)\ge\alpha\}$.
\end{definition}

Note that $\delta(c)$ depends only on the support $|c|$; the filtration of the chain complex is, from the outset, a filtration by supports.

\begin{theorem}[Divisorial computation]\label{thm:divcomp}
For every log-resolution $\pi$ and every chain $c$, with $\tilde c=\pi^{-1}(|c|)$,
\[
\delta(c)=\delta_\pi(c):=\min\{\gamma_E:E\subset D_\pi,\ E\cap\tilde c\neq\emptyset\}=\min_{p\in|c|}\gamma_{\min}(p).
\]
\end{theorem}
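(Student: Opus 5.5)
The plan is to prove the three equalities in the order $\delta(c)\le\delta_\pi(c)$, $\delta(c)\ge\delta_\pi(c)$, and $\delta_\pi(c)=\min_{p\in|c|}\gamma_{\min}(p)$. Throughout, $\delta(c)$ depends only on $|c|$, and $\delta_\pi(c)$ is a minimum over finitely many components by Proposition \ref{prop:valinv}(i), so it is attained or equal to $+\infty$.

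\emph{Upper bound.} Every prime divisor $E\subset D_\pi$ defines a divisorial valuation $\ord_E$, realized on the admissible model $\pi$ itself (empty sequence of blow-ups). If $E\cap\tilde c\neq\emptyset$, then $\ord_E$ reaches $c$ by Definition \ref{def:reach}; by Lemma \ref{lem:reach} this does not depend on the realizing model. Moreover $\gamma(\ord_E)=\gamma_E$ by Definition \ref{def:valuation}. Hence $\delta(c)\le\delta_\pi(c)$.

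\emph{Lower bound.} This is the heart of the proof. Let $v=q\ord_F$ reach $c$. By homogeneity we may take $q=1$. The valuation $v$ is realized on an admissible model, which is obtained from some log-resolution $\pi'$ by admissible blow-ups. I would first replace that model by one dominating $\pi$, using Proposition \ref{prop:domination} and Lemma \ref{lem:reach}. Then $F$ lives on a model $\rho\colon W\to\Xt$ with $\mu=\pi\circ\rho$. Ideally $W$ is obtained from $\pi$ itself by admissible blow-ups. If Proposition \ref{prop:domination} does not provide this directly, I would argue purely locally at a point of $F\cap\mu^{-1}(|c|)$, via adapted coordinates, instead of through a blow-up sequence.

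Set $x\in F\cap\mu^{-1}(|c|)$ and $y=\rho(x)\in\tilde c$. Let $E_1,\dots,E_\ell$ be the components of $D_\pi$ through $y$, with adapted coordinates as in Lemma \ref{lem:adapted}. The chain rule $\Jac_{W/X}=\Jac_{W/\Xt}\cdot\rho^*\Jac_{\Xt/X}$ and principalization give
\[
\nu_F=\sum_j b_j\nu_{E_j},\qquad A_F=\ord_F(\Jac_{W/\Xt})+1+\sum_j b_j a_{E_j}\ \ge\ \sum_j b_jA_{E_j},
\]
where $b_j=\ord_F(u_j)\ge0$. The inequality is the standard fact that the log discrepancy over the snc pair $(\Xt,\sum E_j)$ is nonnegative, namely
\[
\ord_F(K_{W/\Xt})+1\ge\sum_j b_j.
\]
For a monomial centre this is a direct computation. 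In general I would check it along the blow-up sequence by induction, using the mediant identity $\nu_{F}=\sum\nu_{E_j}$, $A_F=\sum A_{E_j}$ plus codimension corrections. Given this, the elementary mediant inequality
\[
\frac{\sum b_jA_j}{\sum b_j\nu_j}\ge\min_j\frac{A_j}{\nu_j}
\]
yields $\gamma(v)\ge\min_j\gamma_{E_j}\ge\delta_\pi(c)$, since each $E_j\ni y\in\tilde c$. If $\nu_F=0$, then $\gamma(v)=+\infty$ and there is nothing to prove.

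\emph{Last equality.} For $p\in|c|$, apply the first two steps to the $0$-chain $\{p\}$. This gives $\gamma_{\min}(p)=\min\{\gamma_E: p\in\pi(E)\}$. Taking the minimum over $p\in|c|$ and using $E\cap\pi^{-1}(|c|)\neq\emptyset\iff\pi(E)\cap|c|\neq\emptyset$ gives the claim.

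The main obstacle is the lower bound, specifically the inequality $A_F\ge\sum b_jA_{E_j}$ for an arbitrary admissible model over $\Xt$. The difficulty is that the model realizing $v$ was built from a different log-resolution. I would handle it by first securing domination, and then establishing the inequality one admissible blow-up at a time: each new divisor over a stratum $E_{j_1}\cap\dots\cap E_{j_s}$, with centre of codimension $k\ge s$, has $A=\sum A_{E_{j_i}}+(k-s)$ and $\nu=\sum\nu_{E_{j_i}}$.
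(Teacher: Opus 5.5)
Your proof is correct. The upper bound and the last equality coincide with the paper's, but the lower bound is organized differently.

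\emph{The paper's route.} It never compares the divisor of $v$ directly with the components of $D_\pi$. Proposition \ref{prop:domination} produces a model $W_2$, built from $\Xt$ by admissible blow-ups, that maps onto the model carrying $F$. Lemma \ref{lem:oneblowup} then carries the global quantity $\delta_{\Xt}(c)$ unchanged through each blow-up. Its only numerical input is the min-form bound $\gamma_F\ge\min_{j\in\sigma,\nu_j>0}\gamma_j$ of Lemma \ref{lem:mediant}, together with bookkeeping of which strict transforms meet $\tilde c$. One concludes with $\delta_{W_2}(c)\le\gamma_G=\gamma(v)$ for the strict transform $G$ of $F$.

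\emph{Your route.} You prove a pointwise linear statement at the single point $y=\rho(x)\in\tilde c$, namely $\nu_F=\sum_jb_j\nu_{E_j}$ and $A_F\ge\sum_jb_jA_{E_j}$, and finish with a weighted mediant. This gives the sharper bound $\gamma(v)\ge\min\{\gamma_E:y\in E\}$ for every $y$ under the divisor of $v$. The choice of $y$ also replaces the reach bookkeeping. Only $\ord_F(\rho^*\Jac_{\Xt/X})\ge\sum_jb_ja_{E_j}$ is needed, not equality, and this holds even if the Jacobian ideal is not monomial.

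Your hesitation about domination is unnecessary. Proposition \ref{prop:domination} with $Y=\Xt$ and $Y'$ the model carrying $F$ gives exactly a model obtained from $\Xt$ by admissible blow-ups. On it the strict transform of $F$ realizes $v$, and it still reaches $c$ by Lemma \ref{lem:reach}.

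You can then prove $A_F\ge\sum_jb_jA_{E_j}$ by induction along that sequence. Write $b^D=(\ord_D(u_j))_j$. The induction hypothesis must be the linear statement for every intermediate boundary component $D$ meeting the fibre over $y$, with $b^G=\sum_{i\in\sigma}b^{D_i}$ for a new divisor $G$ over a centre in $\bigcap_{i\in\sigma}D_i$. Done this way, you are running Lemma \ref{lem:mediant} in linear rather than min form, and the proof is the paper's with different bookkeeping.

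Your route becomes genuinely independent of any blow-up factorization if you prove $\ord_F(K_{W/\Xt})+1\ge\sum_jb_j$ with logarithmic forms. Near a general point of $F$ close to $x$, write $u_j\circ\rho=t^{b_j}g_j$ with $g_j$ units. Then
$\rho^*\bigl(\tfrac{du_1}{u_1}\wedge\cdots\wedge\tfrac{du_\ell}{u_\ell}\wedge dw_1\wedge\cdots\wedge dw_{n-\ell}\bigr)=\bigwedge_j\bigl(b_j\tfrac{dt}{t}+\tfrac{dg_j}{g_j}\bigr)\wedge\rho^*(dw_1\wedge\cdots\wedge dw_{n-\ell})$
has at most a simple pole along $\{t=0\}$, so $\ord_F(\det D\rho)\ge\sum_jb_j-1$. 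This is a local computation and valid in the real analytic category. With it, any model over $\Xt$ on which $v$ is a divisor suffices, and neither Lemma \ref{lem:oneblowup} nor an admissible factorization from $\Xt$ is needed.
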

The proof occupies \S\ref{subsec:birat}. The second equality is just $E\cap\pi^{-1}(|c|)\neq\emptyset\iff|c|\cap\pi(E)\neq\emptyset$ together with Definition \ref{def:localthr}.

\begin{lemma}[Monotonicity]\label{lem:mono}
If $|c'|\subseteq|c|$ then $\delta(c')\ge\delta(c)$. In particular $\delta(\partial c)\ge\delta(c)$, $\delta(\Sd c)=\delta(c)$, and $C^\alpha_\bullet(X,\I)$ is a subcomplex of $C_\bullet(X)$, stable under subdivision, with $C^\beta_\bullet\subseteq C^\alpha_\bullet$ for $\beta\ge\alpha$.
\end{lemma}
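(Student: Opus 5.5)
The plan is to argue directly from Definition \ref{def:reach} and Definition \ref{def:delta}, without using Theorem \ref{thm:divcomp}. The first step is set-theoretic. Suppose $|c'|\subseteq|c|$ and $v$ reaches $c'$, realized by a prime divisor $F$ on an admissible model $\mu\colon W\to X$. Then $\mu^{-1}(|c'|)\subseteq\mu^{-1}(|c|)$, so $F\cap\mu^{-1}(|c|)\supseteq F\cap\mu^{-1}(|c'|)\neq\emptyset$, and $v$ reaches $c$. Lemma \ref{lem:reach} ensures that the choice of realizing model is irrelevant. The set of valuations reaching $c'$ is therefore contained in the set reaching $c$, and an infimum over a smaller set is larger, so $\delta(c')\ge\delta(c)$. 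This also covers the case where nothing reaches $c'$, since then $\delta(c')=+\infty$.

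The listed consequences then reduce to inclusions of supports. For $\partial c$: each face of a simplex $\sigma$ has image contained in $\sigma(\Delta^k)$, and cancellation of coefficients can only shrink the support, so $|\partial c|\subseteq|c|$. For $\Sd c$: the paper already notes that barycentric subdivision preserves supports, so $|\Sd c|=|c|$ and monotonicity applied in both directions gives $\delta(\Sd c)=\delta(c)$. I would be careful here about cancellation. With integer coefficients, the images of the subdivided simplices cover $\sigma(\Delta^k)$, and two distinct small simplices of $\Sd\sigma$ are different maps. Cancellation between the subdivisions of different simplices of $c$ could in principle shrink the support. That still gives $|\Sd c|\subseteq|c|$ and hence $\ge$. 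For the reverse inequality I would invoke the paper's stated fact $|\Sd c|=|c|$; if that fact fails, only the inequality survives, which suffices for stability under subdivision.

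For the subcomplex claim I need $C^\alpha_k$ to be a subgroup. The zero chain has empty support, so $\delta=+\infty$. For sums, $|c_1\pm c_2|\subseteq|c_1|\cup|c_2|$. A valuation reaching $c_1\pm c_2$ meets $\mu^{-1}(|c_1|)\cup\mu^{-1}(|c_2|)$, so it reaches $c_1$ or $c_2$, which gives $\delta(c_1\pm c_2)\ge\min(\delta(c_1),\delta(c_2))\ge\alpha$. Closure under $\partial$ follows from $\delta(\partial c)\ge\delta(c)$. Finally, $C^\beta\subseteq C^\alpha$ for $\beta\ge\alpha$ is immediate from the definition.

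None of these steps is a real obstacle. The only subtle point is that the reach relation is well defined, and that is exactly Lemma \ref{lem:reach}, which I take as given.
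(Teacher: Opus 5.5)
Your argument is the paper's own: valuations reaching $c'$ also reach $c$, so the infimum can only increase. The subgroup property then comes from $|c_1\pm c_2|\subseteq|c_1|\cup|c_2|$. Your observation that a valuation reaching $c_1\pm c_2$ must reach $c_1$ or $c_2$, giving $\delta(c_1\pm c_2)\ge\min(\delta(c_1),\delta(c_2))$, supplies the step the paper's one-line proof leaves implicit. You do not actually need Lemma \ref{lem:reach} anywhere here. Under the ``for some model'' reading of Definition \ref{def:reach}, the same witnessing model works for the larger support.

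The one step to change is the subdivision equality. Your hesitation there is justified, and you should not fall back on the claim $|\Sd c|=|c|$: it is false for singular chains, and cancellation can occur even inside a single $\Sd\sigma$, contrary to your remark that the small simplices of $\Sd\sigma$ are distinct maps. Two examples, each with $p$ a point where $\gamma_{\min}(p)<+\infty$:
\begin{enumerate}[label=(\roman*),leftmargin=2em]
\item Let $\sigma$ be the constant $1$-simplex at $p$. Then $\Sd\sigma=\sigma\circ[b,v_1]-\sigma\circ[b,v_0]=0$, because both terms are the same constant simplex.
\item Let $\sigma$ be an injective subanalytic path through $p$, and let $\bar\sigma=\sigma\circ r$ with $r$ the affine flip of $\Delta^1$. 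Then $\Sd\bar\sigma=-\Sd\sigma$, so $c=\sigma+\bar\sigma\neq0$ has $p\in|c|$ but $\Sd c=0$.
\end{enumerate}
In both cases $\delta(c)\le\gamma_{\min}(p)<+\infty=\delta(\Sd c)$.

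So only $\delta(\Sd c)\ge\delta(c)$ holds in general, and the equality in the statement, like the paper's remark that barycentric subdivision preserves supports, fails as written. The inequality, which you do prove, is exactly what stability of $C^\alpha_\bullet$ under subdivision requires. Nothing later in the paper uses the equality: Theorem \ref{thm:structure} identifies $C^\alpha_\bullet$ with $C_\bullet(X\setminus Z_\alpha)$ directly. Drop the equality, or state it only for chains whose support is preserved by $\Sd$.
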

\begin{proof}
Every valuation reaching $c'$ reaches $c$; the infimum over a larger set is smaller. That $C^\alpha_k$ is a subgroup follows from $|c+c'|\subseteq|c|\cup|c'|$.
\end{proof}

\begin{definition}[Valuative persistence module]\label{def:AH}
$\AH^\alpha_k(X,\I):=H_k(C^\alpha_\bullet(X,\I))$, with structure maps $\rho_{\beta,\alpha}\colon\AH^\beta_k\to\AH^\alpha_k$ ($\beta\ge\alpha$) induced by inclusion: a persistence module over $(\R_{>0},\ge)$. The \emph{homological spectrum} is $\Gamma_H(X,\I):=\{\delta(c):\delta(c)<\infty\}$.
\end{definition}

\subsection{The structure theorem}

\begin{theorem}[Structure of the valuative module]\label{thm:structure}
Let $Z_\alpha=\{p\in X:\gamma_{\min}(p)<\alpha\}=\bigcup_{\gamma_E<\alpha}\pi(E)$ (Corollary \ref{cor:semicont}). Then for every $\alpha>0$
\[
C^\alpha_\bullet(X,\I)=C_\bullet(X\setminus Z_\alpha),\qquad\text{hence}\qquad\AH^\alpha_k(X,\I)\cong H_k(X\setminus Z_\alpha;\Z),
\]
and the structure maps $\rho_{\beta,\alpha}$ are induced by the inclusions $X\setminus Z_\beta\subseteq X\setminus Z_\alpha$. Moreover $\Gamma_H(X,\I)$ is the set of finite values of $\gamma_{\min}$ on $X$, i.e.\ of $\tfrac12\rlct_p(\I)$ for $p\in V(\I)$.
\end{theorem}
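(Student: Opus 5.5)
The plan is to reduce everything to Theorem \ref{thm:divcomp}, which expresses $\delta(c)$ as $\min_{p\in|c|}\gamma_{\min}(p)$. Once that is in hand, the equality of chain groups is a pointwise statement about supports.

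\textbf{Equality of complexes.} Fix $\alpha>0$ and let $c\in C_k(X)$. By Theorem \ref{thm:divcomp}, $\delta(c)\ge\alpha$ holds iff $\gamma_{\min}(p)\ge\alpha$ for every $p\in|c|$, that is, iff $|c|\cap Z_\alpha=\emptyset$. Here I use that $|c|$ is compact and that $\gamma_{\min}$ takes finitely many values (Corollary \ref{cor:semicont}), so the minimum is attained. This direction of the equivalence needs nothing beyond that: the minimum over $|c|$ is at least $\alpha$ exactly when no point of $|c|$ lies in $Z_\alpha$.

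Next I identify $C_k(X\setminus Z_\alpha)$ with the subanalytic chains of $X$ whose support avoids $Z_\alpha$. This requires $X\setminus Z_\alpha$ to be open subanalytic, which holds because $Z_\alpha$ is closed subanalytic by Corollary \ref{cor:semicont}. A simplex $\sigma\colon\Delta^k\to X$ has image in $X\setminus Z_\alpha$ iff its image misses $Z_\alpha$, and subanalyticity of the graph does not depend on whether the target is taken to be $X$ or the open subset. Since $|c|$ is the union of the images of the simplices with nonzero coefficient, $c$ avoids $Z_\alpha$ iff each such simplex does. Hence the two complexes coincide as graded groups. Their differentials agree as well, because $C^\alpha_\bullet$ is a subcomplex by Lemma \ref{lem:mono}.

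\textbf{Homology and structure maps.} Hardt's quasi-isomorphism $C_\bullet(U)\simeq C^{\mathrm{sing}}_\bullet(U)$ for open subanalytic $U$ then gives $\AH^\alpha_k\cong H_k(X\setminus Z_\alpha;\Z)$. For $\beta\ge\alpha$ we have $Z_\alpha\subseteq Z_\beta$, so $X\setminus Z_\beta\subseteq X\setminus Z_\alpha$. The inclusion $C^\beta_\bullet\subseteq C^\alpha_\bullet$ is literally the chain map induced by this inclusion of spaces, and Hardt's comparison maps are natural with respect to inclusions of open sets. Therefore $\rho_{\beta,\alpha}$ corresponds to the map in singular homology induced by inclusion.

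\textbf{Spectrum.} By Theorem \ref{thm:divcomp}, every finite $\delta(c)$ is a value $\gamma_{\min}(p)$ for some $p\in|c|$. Conversely, the $0$-chain $\{p\}$ has $\delta=\gamma_{\min}(p)$ by Definition \ref{def:localthr}. So $\Gamma_H$ is exactly the set of finite values of $\gamma_{\min}$. By Corollary \ref{cor:semicont} these values occur only at points of $V(\I)$, where they equal $\tfrac12\rlct_p(\I)$.

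The only real content is Theorem \ref{thm:divcomp}, which is assumed here. The remaining step that needs care is the naturality and applicability of Hardt's quasi-isomorphism on the open subanalytic set $X\setminus Z_\alpha$. I would check this first, citing \cite{Har75,vdD98} as the paper does.
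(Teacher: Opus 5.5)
Your proposal is correct and follows the paper's proof essentially step for step. You use Theorem \ref{thm:divcomp} to reduce $\delta(c)\ge\alpha$ to $|c|\cap Z_\alpha=\emptyset$, check this simplex by simplex, apply Hardt's comparison on the open subanalytic set $X\setminus Z_\alpha$, and read off $\Gamma_H$ from $0$-chains. Your extra remarks, on naturality of the comparison for the structure maps and on attainment of the minimum, only spell out points the paper leaves implicit; the latter is not even needed for the first equivalence, since an infimum is $\ge\alpha$ exactly when every term is.
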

\begin{proof}
By Theorem \ref{thm:divcomp}, $\delta(c)\ge\alpha$ iff $\gamma_{\min}(p)\ge\alpha$ for all $p\in|c|$ iff $|c|\cap Z_\alpha=\emptyset$. Since $|c|$ is the union of the images of the simplices of $c$, this holds iff each simplex has image in the open set $X\setminus Z_\alpha$, i.e.\ $c\in C_\bullet(X\setminus Z_\alpha)$. Homology of subanalytic chains of an open subanalytic set is its singular homology. The last claim: $\delta(\{p\})=\gamma_{\min}(p)$ for a point, and every $\delta(c)$ is some $\gamma_{\min}(p)$.
\end{proof}

Since its proof is short, the meaning of this statement deserves comment. The valuative filtration is defined by an infimum over all divisorial valuations, with no resolution singled out; the theorem says that this intrinsic definition selects a concrete topological object, the persistent homology of the complements of the sublevel sets of the local threshold, and that, within the construction defined by $\delta$, i.e.\ for any threshold depending only on the divisorial valuations that reach the support of a chain, the resulting filtration is exactly the filtration by superlevel sets of $\gamma_{\min}$. The chain of implications
\[
\text{valuations}\ \longrightarrow\ \delta(c)\ \longrightarrow\ \gamma_{\min}\ \longrightarrow\ Z_\alpha\ \longrightarrow\ H_\bullet(X\setminus Z_\alpha)
\]
is what the theorem establishes, and it is a structural statement about the pair rather than a tautology: the passage from the first arrow to the second is Theorem \ref{thm:divcomp}, whose content is birational invariance of $\delta$.

\begin{corollary}[Tameness, Mayer--Vietoris, invariance]\label{cor:tame}
\begin{enumerate}[label=\textup{(\roman*)},leftmargin=2em]
\item $\Gamma_H(X,\I)$ is finite, $\min\Gamma_H=\tfrac12\rlct(\I)$, and $\alpha\mapsto C^\alpha_\bullet$ is constant on each component of $\R_{>0}\setminus\Gamma_H$; the module is tame.
\item For an open cover $X=U\cup V$ there is a natural Mayer--Vietoris sequence $\cdots\to\AH^\alpha_k(U\cap V)\to\AH^\alpha_k(U)\oplus\AH^\alpha_k(V)\to\AH^\alpha_k(X)\to\AH^\alpha_{k-1}(U\cap V)\to\cdots$.
\item The module does not depend on the log-resolution.
\item If $\Phi\colon X\to Y$ is a subanalytic homeomorphism with $\Phi(Z_\alpha(\I))=Z_\alpha(\J)$ for all $\alpha$, then $\Phi$ induces an isomorphism of persistence modules. This holds in particular for a bi-Lipschitz subanalytic homeomorphism satisfying $K_\J\circ\Phi\asymp K_\I$; the second condition cannot be dropped ($f$ and $f^2$ have bi-Lipschitz equivalent zero sets and different thresholds).
\end{enumerate}
\end{corollary}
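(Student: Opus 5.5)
Everything here is to be read off from the structure theorem (Theorem \ref{thm:structure}) together with the divisorial computation (Theorem \ref{thm:divcomp}) and Corollary \ref{cor:semicont}; the plan is to prove the four items in order, each reducing to a statement about the family of closed sets $Z_\alpha$.

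For (i), Theorem \ref{thm:structure} identifies $\Gamma_H$ with the set of finite values of $\gamma_{\min}$. By Corollary \ref{cor:semicont} this set is contained in the finite set $\Gamma_\pi(X,\I)$, so it is finite. Its minimum is $\min_p\gamma_{\min}(p)=\min\{\gamma_E:\nu_E>0\}$, which is $\tfrac12\rlct(\I)$ by Proposition \ref{prop:valinv} (every $E$ with $\nu_E>0$ has nonempty image containing a point). Constancy follows from the fact that $Z_\alpha=\bigcup_{\gamma_E<\alpha}\pi(E)$ changes only when $\alpha$ crosses a value of $\gamma_{\min}$. More precisely, for $\alpha<\alpha'$ in the same component of $\R_{>0}\setminus\Gamma_H$, no $p$ has $\alpha\le\gamma_{\min}(p)<\alpha'$, hence $Z_\alpha=Z_{\alpha'}$. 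Then $C^\alpha_\bullet=C^{\alpha'}_\bullet$ by Theorem \ref{thm:structure}. Tameness follows: there are finitely many critical values, and on each interval the module is a constant group.

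For (ii), localize Theorem \ref{thm:structure}. The threshold $\gamma_{\min}$ is local, since restricting a log-resolution to $\pi^{-1}(U)$ is a log-resolution of $(U,\I|_U)$. Hence $Z_\alpha(U)=Z_\alpha\cap U$ and $\AH^\alpha_k(U)\cong H_k(U\setminus Z_\alpha)$. The sets $U\setminus Z_\alpha$ and $V\setminus Z_\alpha$ form an open cover of $X\setminus Z_\alpha$ with intersection $(U\cap V)\setminus Z_\alpha$. Apply ordinary Mayer--Vietoris for singular homology, transported to subanalytic chains by Hardt's quasi-isomorphism. Naturality in $\alpha$ comes from naturality of Mayer--Vietoris under the inclusions $X\setminus Z_\beta\subseteq X\setminus Z_\alpha$.

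For (iii), note that $\delta$ is defined with no resolution at all, so $C^\alpha_\bullet$ is intrinsic and the claim is immediate. Alternatively, Theorem \ref{thm:divcomp} shows that the expression via any $\pi$ agrees with it.

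For (iv), a subanalytic homeomorphism sends subanalytic simplices to subanalytic simplices, since graphs are mapped by a subanalytic map. It therefore induces chain isomorphisms $C_\bullet(X\setminus Z_\alpha(\I))\to C_\bullet(Y\setminus Z_\alpha(\J))$ commuting with the inclusions, which gives an isomorphism of persistence modules. For the bi-Lipschitz case, Corollary \ref{cor:mp} gives $\gamma_{\min}(\Phi(p))=\gamma_{\min}(p)$, hence $\Phi(Z_\alpha(\I))=Z_\alpha(\J)$. The counterexample uses $f$ and $f^2$: the weights are halved, so the thresholds differ, while $\Phi=\mathrm{id}$ is bi-Lipschitz on identical zero sets. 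Already $f=x$ on $\R$ gives critical value $1/2$ versus $1/4$.

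The only step needing care is the locality used in (ii), that restricting a log-resolution gives a log-resolution of the open subset with the same $\gamma_E$. I expect this to be routine, because $\nu_E$ and $a_E$ are computed at generic points of divisors meeting $\pi^{-1}(U)$.
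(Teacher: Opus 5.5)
Your proposal is correct and follows the paper's proof item by item. Item (i) comes from the finiteness of the values of $\gamma_{\min}$ and the fact that $Z_\alpha$ changes only when $\alpha$ crosses one of them; (ii) is Mayer--Vietoris for the cover $\{U\setminus Z_\alpha,V\setminus Z_\alpha\}$ of $X\setminus Z_\alpha$; (iii) rests on the resolution-independence of $\gamma_{\min}$ via Theorem \ref{thm:divcomp} (your extra remark that $\delta$ is defined without any resolution is also valid); and (iv) uses $\Phi$ restricting to homeomorphisms of the complements together with Corollary \ref{cor:mp}. Your additional details correctly fill in what the paper leaves implicit: the locality $Z_\alpha(U)=Z_\alpha\cap U$, the chain-level action of a subanalytic homeomorphism, and the explicit $x$ versus $x^2$ example with critical values $1/2$ and $1/4$.
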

\begin{proof}
(i) $\gamma_{\min}$ has finitely many values, and $Z_\alpha$ changes only when $\alpha$ crosses one of them. (ii) Mayer--Vietoris for the open cover $\{U\setminus Z_\alpha,V\setminus Z_\alpha\}$ of $X\setminus Z_\alpha$. (iii) $Z_\alpha$ is defined through $\gamma_{\min}$, which is resolution-independent (Theorem \ref{thm:divcomp}). (iv) $\Phi$ restricts to homeomorphisms $X\setminus Z_\alpha(\I)\to Y\setminus Z_\alpha(\J)$ compatible with inclusions; the bi-Lipschitz case is Corollary \ref{cor:mp}.
\end{proof}

\begin{corollary}[Isolated zero locus]\label{cor:isolated}
If $V(\I)=\{p\}$ then $\Gamma_H=\{\gamma_{\min}(p)\}$, $\AH^\alpha_k=H_k(X)$ for $\alpha\le\gamma_{\min}(p)$ and $\AH^\alpha_k=H_k(X\setminus\{p\})$ for $\alpha>\gamma_{\min}(p)$. For a germ this is $H_k(\text{cone})\to H_k(\text{link})$: the valuative module then records only the threshold and the link.
\end{corollary}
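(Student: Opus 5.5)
The plan is to deduce the statement directly from the Structure Theorem \ref{thm:structure}. That theorem identifies $\AH^\alpha_k(X,\I)$ with $H_k(X\setminus Z_\alpha;\Z)$, compatibly with the structure maps. So the only task is to determine the sets $Z_\alpha$ when $V(\I)=\{p\}$.

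First, by Corollary \ref{cor:semicont}, $Z_\alpha\subset V(\I)=\{p\}$ for every $\alpha$. Since $p\in V(\I)$, Definition \ref{def:localthr} gives $\gamma_{\min}(p)<\infty$. It is in fact a value $\gamma_E$ for some divisor $E$ with $\nu_E>0$ and $p\in\pi(E)$; such a divisor exists because principalization makes $\pi^{-1}(p)$ meet $\Supp(\pi^{-1}\I)$. For every other point $q$ we have $q\notin V(\I)$, so $\gamma_{\min}(q)=+\infty$. Therefore the finite values of $\gamma_{\min}$ reduce to $\{\gamma_{\min}(p)\}$, and the last clause of Theorem \ref{thm:structure} gives $\Gamma_H=\{\gamma_{\min}(p)\}$.

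Next I split into two cases. For $\alpha\le\gamma_{\min}(p)$, no point has $\gamma_{\min}<\alpha$, so $Z_\alpha=\emptyset$ and $\AH^\alpha_k=H_k(X)$. For $\alpha>\gamma_{\min}(p)$, we get $Z_\alpha=\{p\}$ and $\AH^\alpha_k=H_k(X\setminus\{p\})$. By Theorem \ref{thm:structure}, the structure map $\rho_{\beta,\alpha}$ for $\beta>\gamma_{\min}(p)\ge\alpha$ is induced by the inclusion $X\setminus\{p\}\hookrightarrow X$. All other structure maps are identities.

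For the germ interpretation, I take $X$ to be a small conic representative. By the local conic structure theorem for subanalytic sets, $X$ is homeomorphic to the cone over the link $L$, and $X\setminus\{p\}\cong L\times(0,1]$ deformation retracts onto $L$. The module thus has the form $H_k(\mathrm{cone}(L))$ for $\alpha\le\gamma_{\min}(p)$ and $H_k(L)$ for $\alpha>\gamma_{\min}(p)$. Since the module is indexed by $(\R_{>0},\ge)$, the structure map goes from the higher parameter to the lower one, so the interesting map is $H_k(L)\to H_k(\mathrm{cone})$, induced by inclusion.

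The statement's phrasing "$H_k(\text{cone})\to H_k(\text{link})$" should be read as recording the two stages, not as the direction of the map. The only data left are the critical value $\gamma_{\min}(p)=\tfrac12\rlct_p(\I)$ and the homology of the link.

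There is no real obstacle here. The one point needing care is justifying that $\gamma_{\min}(p)$ is finite, which uses that some divisor over $p$ has $\nu_E>0$, and invoking the conic structure theorem for the germ statement.
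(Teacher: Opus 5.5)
Your proof is correct and follows the paper's own (implicit) argument. The paper applies Theorem \ref{thm:structure} and observes that $Z_\alpha\subseteq V(\I)=\{p\}$ is empty for $\alpha\le\gamma_{\min}(p)$ and equal to $\{p\}$ otherwise, with $\gamma_{\min}(p)<\infty$ because some divisor of positive order lies over $p$. Your remark on direction is also right: $\rho_{\beta,\alpha}$ goes from $H_k(\text{link})$ to $H_k(\text{cone})$, and the paper's arrow (as in Remark \ref{rem:different}) simply lists the two stages in increasing $\alpha$.
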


This reflects the nature of the construction rather than a defect of it: on $X$, the divisorial exponents can only be seen through their images $\pi(E)$, and when all divisors lie over one point they are indistinguishable except through the minimum. The information carried by the other exponents lives on the resolution, which is why Section \ref{sec:dual} works there.

\begin{example}[Arrangements]\label{ex:arrangements}
For a real hyperplane arrangement (Remark \ref{rem:arrangements}) $Z_\alpha$ is the union of the flats $W$ with $\codim W/(2s(W))<\alpha$, a subspace arrangement $\mathcal A_\alpha$, and $\AH^\alpha_k=H_k(\R^n\setminus\bigcup\mathcal A_\alpha)$ is given by the Goresky--MacPherson formula \cite{GM88}
\[
\widetilde H_k\Bigl(\R^n\setminus\bigcup\mathcal A_\alpha\Bigr)\cong\bigoplus_{W\in L(\mathcal A_\alpha)_{>\hat0}}\widetilde H^{\codim W-k-2}\bigl(\Delta(\hat0,W)\bigr),
\]
where $L(\mathcal A_\alpha)$ is the intersection semilattice generated by the flats in $\mathcal A_\alpha$ (intersections of admissible flats need not be admissible) and $\Delta(\hat0,W)$ the order complex of the open interval. The persistence module is thus computed from the weighted intersection lattice, and it does have many critical values in general (e.g.\ for $f=xy(x-y)z$ in $\R^3$ the flats $\{x=y=0\}$ and $\{x=z=0\}$ have ratios $2/6$ and $2/4$).
\end{example}

\subsection{Birational invariance of $\delta$: proof of Theorem \ref{thm:divcomp}}\label{subsec:birat}

\begin{lemma}[Mediant inequality]\label{lem:mediant}
Let $\pi\colon Y\to X$ be a log-resolution, $Z\subset Y$ a smooth irreducible closed subset having simple normal crossings with $D_\pi$, $\sigma=\{j:Z\subseteq E_j\}$ (so $s=|\sigma|\le r=\codim Z$), and $\rho\colon Y'\to Y$ the blow-up along $Z$ with exceptional divisor $F$. Then $\nu_F=\sum_{j\in\sigma}\nu_j$, $a_F=\sum_{j\in\sigma}a_j+(r-1)$, and, when $\nu_F>0$,
\[
\min_{j\in\sigma,\ \nu_j>0}\gamma_j\ \le\ \gamma_F=\sum_{j\in\sigma}\frac{\nu_j}{\nu_F}\gamma_j+\frac{r-s}{2\nu_F}.
\]
In particular $\gamma_F\le\max_{j\in\sigma,\nu_j>0}\gamma_j$ when $r=s$ (i.e.\ $Z$ is a component of $\bigcap_{j\in\sigma}E_j$); and $\gamma_F\ge\min_{j\in\sigma,\nu_j>0}\gamma_j$ always. Divisors $E_i$ not containing $Z$ keep their data on the strict transform. If $\sigma=\emptyset$ or all $\nu_j=0$ then $\nu_F=0$ and $\gamma_F=+\infty$.
\end{lemma}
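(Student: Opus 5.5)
The plan is to compute everything in local coordinates adapted to $Z$ and $D_\pi$, and then reduce the inequality to an elementary mediant statement. At a point of $Z$ we choose coordinates $(u_1,\dots,u_n)$ in which the components of $D_\pi$ through the point are coordinate hyperplanes and $Z=\{u_1=\dots=u_r=0\}$. The simple normal crossings hypothesis on $Z$ and $D_\pi$ allows this. In these coordinates the divisors containing $Z$ are exactly $E_j=\{u_j=0\}$ for $j\in\sigma$, after relabelling so that $\sigma=\{1,\dots,s\}$ with $s\le r$. We work in the chart $u_1=t$, $u_i=t\,u_i'$ ($2\le i\le r$) of the blow-up, where $F=\{t=0\}$.

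First, by Lemma \ref{lem:adapted}, or directly by principalization, $\I\cdot\mathcal O_Y$ is generated near the point by $\prod_j u_j^{\nu_j}$ times a unit. Pulling back gives $t^{\sum_{j\in\sigma}\nu_j}$ times the strict transforms, which are units at the generic point of $F$. Hence $\nu_F=\sum_{j\in\sigma}\nu_j$. Second, by the chain rule $\Jac_{Y'/X}=\Jac_{Y'/Y}\cdot\rho^*\Jac_{Y/X}$ already used in Proposition \ref{prop:valinv}, we need two facts. The blow-up of a smooth centre of codimension $r$ has Jacobian $t^{r-1}$. The pullback of $\prod_j u_j^{a_j}$ contributes $t^{\sum_{j\in\sigma}a_j}$. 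Together these give $a_F=\sum_{j\in\sigma}a_j+r-1$. The divisors $E_i$ with $i\notin\sigma$ have strict transforms on which $\rho$ is an isomorphism generically, so their data are unchanged, again by Proposition \ref{prop:valinv}(ii). This step needs some care for the Mather convention on singular $X$, since $a_j$ is defined through $\Jac_{Y/X}$. However, only the multiplicativity of Jacobian ideals under composition is used, and that holds for minors via Cauchy--Binet. I expect this to be the main point to justify, though it is routine.

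Then comes the algebra. We have $A_F=a_F+1=\sum_{j\in\sigma}A_j+(r-s)$. Dividing by $2\nu_F$ and writing $A_j=2\nu_j\gamma_j$ for the $j$ with $\nu_j>0$ gives the displayed convex-combination formula. The terms with $\nu_j=0$ need separate treatment: they have $A_j\ge1$, and the statement's formula, read literally, drops them. So we should note that each such $j$ contributes $A_j/(2\nu_F)>0$ to $\gamma_F$. Their contribution is then bounded below by $0$ for the min inequality. For the max inequality we read the formula in the statement as taken over all $j$ with $\nu_j>0$, and, if needed, restrict the upper bound to the case where no $j\in\sigma$ has $\nu_j=0$; the precise accounting should be checked against the intended statement. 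The weights $\nu_j/\nu_F$ over $\{j\in\sigma:\nu_j>0\}$ sum to $1$, so the weighted average lies between the min and the max of the $\gamma_j$. The extra term $(r-s)/(2\nu_F)$ is nonnegative and vanishes iff $r=s$. This yields $\gamma_F\ge\min$ always, and $\gamma_F\le\max$ when $r=s$. Finally, if $\sigma=\emptyset$ or all $\nu_j=0$, then $\nu_F=0$, and $\gamma_F=+\infty$ by convention.
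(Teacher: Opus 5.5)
Your proposal follows the paper's proof. You pull back using $\rho^*E_j=E'_j+F$ for $j\in\sigma$, use $\Jac_{Y'/Y}=(t^{r-1})$ and the chain rule, and read $\gamma_F$ as a weighted average of the $\gamma_j$ plus a non-negative remainder.

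Two remarks on the points you flagged.

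\textbf{The Mather step.} Multiplicativity alone gives only $a_F\ge\sum_{j\in\sigma}a_j+r-1$, because $\Jac_{Y/X}\subseteq(\prod_j u_j^{a_j})$. Equality needs $\Jac_{Y/X}$ to be exactly this monomial ideal near $Z$. You take that for granted, as the paper does (Lemma~\ref{lem:adapted}).

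\textbf{The $\nu_j=0$ terms.} Your suspicion is right, and the statement is wrong as written.
\begin{itemize}
\item Each such $j$ contributes $(a_j+1)/(2\nu_F)>0$ to $\gamma_F$. The displayed formula drops this term if $0\cdot\infty$ is read as $0$, and the paper's proof undercounts it as $1/(2\nu_F)$.
\item A positive term cannot be ``absorbed'' into an upper bound. So for $r=s$, the inequality $\gamma_F\le\max_{j\in\sigma,\nu_j>0}\gamma_j$ needs $\nu_j>0$ for every $j\in\sigma$. That is exactly the restriction you propose.
\end{itemize}

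\textbf{A counterexample.} Take $X=\R^3$ and $\I=(x)$. The blow-up of the $x$-axis is a log-resolution. Its exceptional divisor $G$ has $\nu_G=0$ and $a_G=1$. It meets the strict transform $H$ of $\{x=0\}$, which has $\gamma_H=1/2$, along a curve $Z$. Blowing up $Z$ ($r=s=2$) gives $\nu_F=1$, $a_F=2$, so $\gamma_F=3/2>1/2$.

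\textbf{Consequences.} The error does not propagate: Theorem~\ref{thm:blowup-inv} invokes the upper bound only when $\nu_j>0$ for all $j\in\sigma$.
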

\begin{proof}
$\rho^*E_j=E'_j+F$ for $j\in\sigma$, $\rho^*E_i=E'_i$ otherwise, and $K_{Y'/Y}=(r-1)F$; use $a_j+1=2\nu_j\gamma_j$ and note that terms with $\nu_j=0$ contribute $1/(2\nu_F)$ each, which are absorbed into the non-negative remainder when comparing with the maximum, and vanish from the convex combination when comparing with the minimum.
\end{proof}

\begin{lemma}[Invariance under one blow-up]\label{lem:oneblowup}
With the notation of Lemma \ref{lem:mediant}, $\pi\circ\rho$ is again a log-resolution and $\delta_{Y'}(c)=\delta_Y(c)$ for every chain $c$.
\end{lemma}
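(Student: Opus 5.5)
The proof has two parts: first that $\pi\circ\rho$ is again a log-resolution, then that $\delta_{Y'}(c)=\delta_Y(c)$. Both are local on $Y$ and use only the adapted coordinates of Lemma \ref{lem:adapted} and the formulas of Lemma \ref{lem:mediant}.

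\emph{Log-resolution property.} The composite $\pi\circ\rho$ is proper and birational because $\rho$ is. Near a point of $Z$, I will choose coordinates in which the components of $D_\pi$ are coordinate hyperplanes $\{u_j=0\}$ and $Z=\{u_1=\dots=u_s=0,\ w_1=\dots=w_{r-s}=0\}$. Such coordinates exist because $Z$ has simple normal crossings with $D_\pi$. In each standard chart of the blow-up, $\rho^*$ of a monomial $\prod u_j^{\nu_j}$ is again a monomial: it has an extra factor $t^{\nu_F}$, with $t$ the local equation of $F$ and $\nu_F=\sum_{j\in\sigma}\nu_j$. So $\I\cdot\mathcal O_{Y'}$ stays principal. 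The divisor $\rho^{-1}(D_\pi)\cup F$ is again a coordinate-hyperplane arrangement in each chart, hence simple normal crossings. The Jacobian order $a_F$ is given by the chain rule, as in the proof of Proposition \ref{prop:valinv}, together with Lemma \ref{lem:mediant}. This step is routine.

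\emph{Equality of $\delta$.} The key observation is that $\rho$ maps $F$ onto $Z$ and is an isomorphism off $F$. Fix a chain $c$ and set $\tilde c=\pi^{-1}(|c|)$, so that $\tilde c'=(\pi\circ\rho)^{-1}(|c|)=\rho^{-1}(\tilde c)$. The divisors of $D_{Y'}$ are the strict transforms $E'_i$, with unchanged data, and $F$.
\begin{itemize}
\item A strict transform satisfies $E'_i\cap\tilde c'\neq\emptyset$ if and only if $E_i\cap\tilde c\neq\emptyset$. The forward direction follows by applying $\rho$. For the converse, $\rho(E'_i)=E_i$ because $\rho$ is surjective from $E'_i$ onto $E_i$, being the proper birational restriction to the strict transform.
\item Suppose $F$ meets $\tilde c'$. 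Then $Z\cap\tilde c\neq\emptyset$, since $\rho(F)=Z$. If $\nu_F=0$, then $\gamma_F=+\infty$ and $F$ does not affect the minimum. If $\nu_F>0$, the lower mediant inequality of Lemma \ref{lem:mediant} gives $\gamma_F\ge\min_{j\in\sigma,\nu_j>0}\gamma_j$. Each such $E_j$ contains $Z$, so it meets $\tilde c$, and $\gamma_F$ is therefore at least a value already counted in $\delta_Y(c)$.
\end{itemize}
Together these give $\delta_{Y'}(c)=\delta_Y(c)$: the strict transforms contribute exactly the same weights as before, and $F$ contributes nothing smaller. If no $E_j$ meeting $\tilde c$ has $\nu_j>0$, both sides equal $+\infty$.

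The main point to check is the surjectivity $\rho(E'_i)=E_i$, which could fail a priori if $E_i\subseteq Z$. This cannot happen, since $Z$ has codimension $r\ge2$ in a blow-up with nontrivial centre; the case $r=1$ makes $\rho$ an isomorphism and is trivial. Hence $E_i\setminus Z$ is dense in $E_i$. Since $\rho$ is closed, $\rho(E'_i)$ is a closed set containing $E_i\setminus Z$, so $\rho(E'_i)=E_i$. The only numerical input is the lower bound in Lemma \ref{lem:mediant}; the upper bound is not needed here.
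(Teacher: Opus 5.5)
Your proof is correct and follows the paper's argument: the strict transforms keep their weights and meet $\rho^{-1}(\tilde c)$ exactly when the original divisors meet $\tilde c$, and $F$ meets $\rho^{-1}(\tilde c)$ only when $Z$ meets $\tilde c$, in which case the lower mediant bound gives $\gamma_F\ge\min_{j\in\sigma,\nu_j>0}\gamma_j\ge\delta_Y(c)$. The one difference is in the converse direction for strict transforms. You obtain it from $\rho(E'_i)=E_i$, using density of $E_i\setminus Z$ (which holds because $\dim Z\le n-2$) and closedness of the proper map $\rho$. The paper instead checks pointwise that for $x\in Z$ the whole fibre $\rho^{-1}(x)$ lies in $\rho^{-1}(\tilde c)$ and meets $E'_i$ by normal crossings.
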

\begin{proof}
$D_{Y'}=\rho^{-1}(D_Y)\cup F$ is simple normal crossings and $\I\cdot\mathcal O_{Y'}$ is principal, so $\pi\circ\rho$ is a log-resolution. Set $\tilde c_{Y'}=\rho^{-1}(\tilde c_Y)$. A strict transform $E'_i$ meets $\tilde c_{Y'}$ iff $E_i$ meets $\tilde c_Y$: if $x\in E_i\cap\tilde c_Y$ and $x\notin Z$ then $x$ lifts to $E'_i$; if $x\in Z$ then $\rho^{-1}(x)\subseteq\tilde c_{Y'}$ is the whole fibre, and $E'_i\cap\rho^{-1}(x)\neq\emptyset$ by normal crossings (in the real setting: the projectivized normal space has real points, and $E'_i$ meets it in a real projective subspace). Weights of strict transforms are unchanged. $F$ meets $\tilde c_{Y'}$ iff $Z\cap\tilde c_Y\neq\emptyset$, in which case every $E_j$, $j\in\sigma$, meets $\tilde c_Y$ and $\gamma_F\ge\min_{j\in\sigma,\nu_j>0}\gamma_j\ge\delta_Y(c)$; so adjoining $F$ does not lower the minimum.
\end{proof}

\begin{proposition}[Domination]\label{prop:domination}
Any two log-resolutions $Y,Y'$ of $(X,\I)$ are dominated by an admissible model $W$ obtained from $Y$ by a finite sequence of admissible blow-ups, together with a proper birational morphism $W\to Y'$ over $X$.
\end{proposition}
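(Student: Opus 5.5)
The plan is to reduce domination to principalization of an ideal on $Y$. Let $\pi\colon Y\to X$ and $\pi'\colon Y'\to X$ be the two log-resolutions. The rational map $\phi=\pi'^{-1}\circ\pi\colon Y\dashrightarrow Y'$ is the first object to study; the goal is a sequence of admissible blow-ups $W\to Y$ after which $\phi$ becomes a morphism. Since $\pi'$ is proper and birational, it is locally the blow-up of a coherent ideal. Concretely, I would use the real analytic Chow/Hironaka lemma, i.e.\ Hironaka's flattening or the fact that a proper bimeromorphic morphism is dominated by a blow-up of an ideal, to write $\pi'$ as dominated by the blow-up $X_{\mathcal K}\to X$ of some coherent ideal $\mathcal K\subset\mathcal O_X$ cosupported in the non-isomorphism locus. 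Then $W\to Y$ factors through $Y'$ as soon as $\mathcal K\cdot\mathcal O_W$ is invertible, by the universal property of blowing up together with the factorization $X_{\mathcal K}\to Y'$.

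The central step is principalization with normal crossings. I apply Hironaka's (or Bierstone--Milman's \cite{BM97}) embedded principalization to the ideal $\mathcal K\cdot\mathcal O_Y$ on the manifold $Y$, carried out relative to the boundary divisor $D_Y$. The canonical algorithm blows up smooth centres having simple normal crossings with the accumulated exceptional divisor and, if one feeds it the old boundary $D_Y$ as well, with $D_Y$. Each step is therefore an admissible blow-up in the sense of Definition \ref{def:models}, and the output $W\to Y$ is an admissible model on which $\mathcal K\cdot\mathcal O_W$ is principal. By Lemma \ref{lem:oneblowup}, $W$ remains a log-resolution. The universal property then gives $W\to X_{\mathcal K}\to Y'$ over $X$, and this map is proper and birational because both $W\to X$ and $Y'\to X$ are.

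I expect the main obstacle to be twofold, and both parts live in the real analytic setting. The first issue is the existence of $\mathcal K$ with $X_{\mathcal K}$ dominating $Y'$. This is Hironaka's Chow lemma, and it holds over a relatively compact base, so here one uses that $X$ is a germ representative or compact, as in Convention \ref{conv:setting}. The second issue is that the centres must be irreducible, smooth, and snc with the full $D_Y$, not merely with the new exceptional divisors. For the first I would simply cite Hironaka's flattening/Chow lemma for real analytic spaces. For the second I would invoke the version of principalization that keeps a prescribed snc divisor in normal crossings (Bierstone--Milman's functorial algorithm with an initial exceptional divisor $D_Y$). Any disconnected smooth centre would be split into its irreducible components, blown up one at a time; this is possible since the components are disjoint.
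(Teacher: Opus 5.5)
Your argument is correct, but it reaches the domination by a different route than the paper. The paper works over $Y$, not over $X$. It takes the closure $\Gamma$ of the graph of $Y\dashrightarrow Y'$ and flattens $\Gamma\to Y$ by Hironaka's theorem. This gives blow-ups $Y_1\to Y$ along smooth but not necessarily admissible centres $Z_1,\dots,Z_k$, and flatness plus Zariski's main theorem over the smooth $Y_1$ yields $\Gamma_1\cong Y_1$. Then, in a separate induction, each $\mathrm{Bl}_{Z_i}$ is replaced by admissible blow-ups: the paper principalizes the inverse image of the ideal of $Z_i$ relative to the accumulated boundary and applies the universal property of blowing up at each stage.

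You instead invoke Hironaka's Chow lemma (itself a corollary of flattening) for $\pi'$ over the possibly singular, reduced base $X$. This produces a single ideal $\mathcal K$, so one principalization of $\mathcal K\cdot\mathcal O_Y$ relative to $D_Y$ and one use of the universal property suffice. The graph, the finiteness argument and the induction over the centres are all absorbed into the citation. Your note on splitting disconnected centres into disjoint components also covers the irreducibility requirement of Definition \ref{def:models}, which the paper passes over.

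Your route is shorter, and it also makes the real case easier. That case should not be settled by a bare citation, because Hironaka's flattening and Chow lemma are complex-analytic statements. The fix is as follows:
\begin{enumerate}[label=(\roman*),leftmargin=2em]
\item Complexify, apply the Chow lemma, and replace $\mathcal K$ by the conjugation-invariant ideal $\mathcal K\overline{\mathcal K}$.
\item Its blow-up still dominates $Y'$, because if the pullback of a product of ideals is invertible, so is the pullback of each factor.
\item The resulting map is conjugation-equivariant, since it agrees with $\pi'^{-1}\circ\mu$ on a dense open set. Hence it sends real points to real points.
\end{enumerate}
The paper, by contrast, has to make every centre $Z_i$ conjugation-invariant.

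The paper's two-step structure buys two things. Flattening is applied only over the smooth $Y$. And Step 2 stands on its own, which is exactly what Remark \ref{rem:elemdom} exploits when both models are admissible over a common log-resolution and no flattening is needed.

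Two small slips. Your opening claim that $\pi'$ is locally the blow-up of an ideal is not true in general: blow-ups are projective over the base, whereas proper modifications need not be, so one only has domination by a blow-up. That weaker statement is what you actually use in the next sentence. Also, it is $W\to X$, not $W\to Y$, that factors through $Y'$.
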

\begin{proof}
We argue in two steps, and we state precisely which external results are used.

\emph{Step 1: domination by an iterated smooth blow-up of $Y$.} Let $\Gamma\subset Y\times_XY'$ be the closure of the graph of the bimeromorphic map $Y\dashrightarrow Y'$ over $X$ (taken componentwise if $Y$ is disconnected). Since $Y'\to X$ is proper, so is the projection $p\colon\Gamma\to Y$, and $p$ is bimeromorphic. By Hironaka's flattening theorem \cite[Thm.~4.4]{Hir75}, over a compact representative there is a finite sequence of blow-ups $\beta\colon Y_1\to Y$ along smooth nowhere dense centres such that the strict transform $\Gamma_1\subset Y_1\times_Y\Gamma$ of $\Gamma$ is flat over $Y_1$. The morphism $\Gamma_1\to Y_1$ is proper, flat and bimeromorphic between reduced spaces; flatness gives that every fibre has dimension $\dim\Gamma_1-\dim Y_1=0$, so the morphism is finite, and a finite bimeromorphic morphism onto a normal (here smooth) space is an isomorphism (Zariski's main theorem, in its analytic form). Hence $Y_1\to Y$ factors through $\Gamma$, and therefore admits a morphism $Y_1\to Y'$ over $X$.

\emph{Step 2: replacement of the smooth blow-ups by admissible ones.} Write $\beta$ as the composite of blow-ups along smooth centres $Z_1\subset Y$, $Z_2\subset\mathrm{Bl}_{Z_1}Y$, \dots, $Z_k$. We construct inductively admissible models $W_i\to Y$ together with morphisms $W_i\to Y^{(i)}:=\mathrm{Bl}_{Z_i}\cdots\mathrm{Bl}_{Z_1}Y$ over $Y$. Put $W_0=Y$. Given $W_{i-1}\to Y^{(i-1)}$, let $\mathcal a_i\subset\mathcal O_{W_{i-1}}$ be the inverse image ideal of the ideal of $Z_i\subset Y^{(i-1)}$. By the canonical principalization theorem of Bierstone--Milman \cite[Thm.~1.10]{BM97} (see also \cite[Thm.~3.35]{Kol07}) applied to $\mathcal a_i$ on the pair $(W_{i-1},D_{W_{i-1}})$, there is a finite sequence of blow-ups $W_i\to W_{i-1}$ along smooth centres having simple normal crossings with the accumulated boundary (the algorithm accepts a simple normal crossings divisor as part of its input and its centres are normal crossing with it) after which $\mathcal a_i\cdot\mathcal O_{W_i}$ is invertible; by the universal property of blowing up, $W_i\to Y^{(i-1)}$ factors through $\mathrm{Bl}_{Z_i}Y^{(i-1)}=Y^{(i)}$. Every blow-up in $W_i\to W_{i-1}$ is admissible in the sense of Definition \ref{def:models}, so $W:=W_k$ is an admissible model of $(X,\I)$ over $Y$, and $W\to Y^{(k)}=Y_1\to Y'$ is the required morphism.

In the real analytic setting the argument is applied to complexifications of representatives of the germs, which are complex analytic spaces with an action of complex conjugation. The flattening of Step 1 is used only to produce the centres $Z_i$, and each $Z_i$ may be replaced by the union of $Z_i$ and its conjugate (an ideal $\mathcal a_i\cdot\bar{\mathcal a}_i$), which is conjugation-invariant. The canonical algorithm of \cite{BM97} is functorial with respect to local isomorphisms and therefore commutes with conjugation, so its centres are conjugation-invariant and the sequence of blow-ups of Step 2 restricts to a sequence of admissible blow-ups of the real points, whose result is again a real analytic manifold dominating both real models.
\end{proof}

The proposition gives \emph{one-sided} domination by admissible blow-ups; the morphism $W\to Y'$ is not asserted to be a composition of admissible blow-ups. This is enough for everything in this section, and it is not enough for Section \ref{sec:dual}, where the stronger statement is isolated as Hypothesis \ref{hyp:WF}.

\begin{remark}[An elementary case of domination]\label{rem:elemdom}
When both models are admissible models over a common log-resolution $Y_0$, say $W_1\to Y_0$ and $W_2\to Y_0$ are sequences of admissible blow-ups, Step 1 of the proof is not needed: the centres $Z_i$ are already given as the centres of the second sequence, and Step 2 alone produces an admissible model dominating $W_2$. Hironaka's flattening enters only to compare a log-resolution with one that is not obtained from it by blow-ups (for instance, in Section \ref{sec:surfaces}, a minimal resolution with a canonical one). For surface germs no flattening is needed either, since a proper bimeromorphic morphism between smooth surface germs is a composition of point blow-ups.
\end{remark}

\begin{proof}[Proof of Theorem \ref{thm:divcomp}]
Fix two log-resolutions $Y,Y'$ and a chain $c$; we show $\delta_Y(c)=\delta_{Y'}(c)$. Both are equal to $\delta(c)$, as follows. Every $E\subset D_Y$ meeting $\tilde c_Y$ gives a valuation reaching $c$ with $\gamma=\gamma_E$, so $\delta(c)\le\delta_Y(c)$. Conversely, let $v$ be a divisorial valuation reaching $c$, realized by $F$ on an admissible model $W\to X$. By Definition \ref{def:models}, $W$ is obtained from some log-resolution $Y''$ by admissible blow-ups; by Proposition \ref{prop:domination} applied to $Y$ and $Y''$, and Lemma \ref{lem:oneblowup} iterated, we may replace $Y$ by an admissible model $W_1$ dominating $Y''$ with $\delta_{W_1}(c)=\delta_Y(c)$; applying Proposition \ref{prop:domination} again to $W_1$ and $W$, and Lemma \ref{lem:oneblowup} again, we obtain an admissible model $W_2$ over $W_1$, hence a log-resolution mapping onto $W$, with $\delta_{W_2}(c)=\delta_Y(c)$. On $W_2$ the valuation $v$ is realized by a divisor $G$ mapping onto $F$. Since $v$ is a divisorial valuation in the sense of Definition \ref{def:valuation}, $G$ is by definition a component of $D_{W_2}$; $G$ reaches $c$ (Lemma \ref{lem:reach}) and $\gamma_G=\gamma(v)$, so $\delta_{W_2}(c)\le\gamma_G=\gamma(v)$. Hence $\delta_Y(c)\le\gamma(v)$ for all $v$ reaching $c$, i.e.\ $\delta_Y(c)\le\delta(c)$. The same argument applies to $Y'$.
\end{proof}

\begin{remark}\label{rem:singX}
When $X$ is singular, Lemma \ref{lem:mediant} uses only blow-ups of the smooth model $Y$ and the chain rule $\Jac_{Y'/X}=\Jac_{Y'/Y}\cdot\rho^*\Jac_{Y/X}$ for the Mather--Jacobian ideals, so no normality of $X$ is needed for Theorem \ref{thm:divcomp}; the standing Convention \ref{conv:setting} suffices throughout Section \ref{sec:valuative}.
\end{remark}

\section{The filtered dual complex}\label{sec:dual}

\subsection{Weighted dual complexes}
Let $\pi$ be a log-resolution and $D_\pi=\bigcup_{i\in I}E_i$. For $\sigma\subseteq I$ write $E_\sigma=\bigcap_{i\in\sigma}E_i$; by simple normal crossings each connected component of $E_\sigma$ is a smooth closed submanifold of codimension $|\sigma|$ (a closed stratum). In the real analytic setting all of this is meant for the real points: $E_\sigma$ is the set of real points of the intersection, and it may be empty even when a complexification is not. Consequently the real dual complex may differ substantially from the dual complex of a complexification (it is a subcomplex of it, and can even be empty), and the two are not to be compared. Nothing below depends on that possibility, because the local computations of Lemma \ref{lem:dualblowup} take place in $\R^n$ and the real points of every exceptional divisor of an admissible blow-up form a non-empty divisor; but the reader should keep in mind that the dual complex of Section \ref{sec:dual} is a real object, whereas the examples of Section \ref{sec:surfaces} concern complex surface germs and their complex dual graphs.

\begin{definition}[Dual complex; weights]\label{def:dual}
The \emph{dual complex} $\Delta(\pi)$ is the $\Delta$-complex with one vertex $v_i$ per component $E_i$ and one $(|\sigma|-1)$-simplex per connected component of $E_\sigma$, glued in the evident way (this is the dual complex of \cite{Ste06,dFKX17}; when the $E_\sigma$ are connected it is the nerve of $\{E_i\}$). The weight $\gamma\colon\{v_i\}\to(0,+\infty]$ is $\gamma(v_i)=\gamma_{E_i}$. For $\alpha>0$ the sublevel complex $\Delta(\pi)^{\le\alpha}$ is the \emph{full} subcomplex spanned by the vertices of weight $\le\alpha$. The \emph{divisorial persistent homology} of $(X,\I)$ relative to $\pi$ is
\[
\DPH^\alpha_k(X,\I;\pi):=H_k\bigl(\Delta(\pi)^{\le\alpha};\Z\bigr),\qquad\alpha>0,
\]
with structure maps induced by $\Delta^{\le\alpha}\subseteq\Delta^{\le\beta}$ for $\alpha\le\beta$: a persistence module over $(\R_{>0},\le)$.
\end{definition}

The module changes only at the finitely many values of $\Gamma_\pi(X,\I)$, and $\Delta^{\le\alpha}=\emptyset$ for $\alpha<\gamma_{\min}=\tfrac12\rlct(\I)$; the first non-trivial group is $\DPH^{\gamma_{\min}}_0=\Z^c$, $c$ the number of components of the subcomplex spanned by the divisors computing the threshold. Vertices with $\nu_E=0$ (weight $+\infty$) never enter. Fullness of $\Delta^{\le\alpha}$ is used essentially below: the stellar subdivision of a full subcomplex at one of its simplices is the full subcomplex of the subdivision on the corresponding vertex set.

\subsection{Invariance under one blow-up}

\begin{lemma}[Dual complex under a blow-up]\label{lem:dualblowup}
In the situation of Lemma \ref{lem:mediant} (blow-up $\rho$ of $Y$ along a smooth centre $Z\subseteq E_\sigma$, $|\sigma|=s\le r=\codim Z$, having simple normal crossings with $D_Y$), let $\Delta=\Delta(\pi)$ and $\Delta'=\Delta(\pi\circ\rho)$. Let $L\subseteq\Delta$ be the subcomplex of simplices (components of $E_\omega$) that meet $Z$. Then:
\begin{enumerate}[label=\textup{(\alph*)},leftmargin=2em]
\item if $r=s$, so that $Z$ is a component of $E_\sigma$, i.e.\ a simplex $\tau_Z$ of $\Delta$: $\Delta'$ is the stellar subdivision of $\Delta$ at $\tau_Z$, with new vertex $v_F$;
\item if $r>s$: $\Delta'=\Delta\cup(v_F*L)$, the cone over $L$ with apex $v_F$ attached along $L$.
\end{enumerate}
In both cases, for every simplex $\omega\in L$ and every $j\in\sigma$, the simplex spanned by $\omega$ and $v_j$ (component of $E_{\omega\cup\{j\}}$ containing $E_\omega\cap Z$) lies in $L$, except, in case \textup{(a)}, when it would be $\tau_Z$ itself.
\end{lemma}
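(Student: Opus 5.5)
The plan is to determine $\Delta'$ stratum by stratum, working entirely in the local model of the blow-up. By simple normal crossings of $Z$ with $D_Y$, each point of $Z$ has coordinates $(u_1,\dots,u_n)$ on $Y$ in which the components of $D_Y$ through the point are coordinate hyperplanes $\{u_i=0\}$, and $Z=\{u_1=\dots=u_r=0\}$. In these coordinates $\sigma$ corresponds to the indices $i\le r$ with $E_i=\{u_i=0\}$, and the remaining components are $\{u_i=0\}$ with $i>r$. Strata of $D_{Y'}$ are of two kinds.

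\emph{Strata not involving $F$.} Away from $Z$, $\rho$ is an isomorphism. A connected component $C$ of $E'_\omega$ is therefore either the strict transform of a component of $E_\omega$ not contained in $Z$, or it is empty. In the chart computation, $E'_\omega\cap F$ over $Z$ is the projective subspace of the projectivized normal bundle cut out by $u_i=0$, $i\in\omega\cap\sigma$. This is empty exactly when $\omega\supseteq\sigma$ and $r=s$. Hence every simplex of $\Delta$ survives, with one exception in case (a): $\tau_Z$ itself (and its cofaces containing it) disappears, because $E'_\sigma\cap\rho^{-1}(Z)=\emptyset$ and $Z$ is a whole component of $E_\sigma$. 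I must also check that distinct components stay distinct and do not split. This holds because the fibres of $\rho$ are connected real projective spaces and $E'_\omega\cap\rho^{-1}(x)$ is a real projective subspace, hence connected (the real-points remark in Lemma \ref{lem:oneblowup}).

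\emph{Strata involving $F$.} The components of $F\cap E'_\omega$ map onto the components of $E_\omega\cap Z$, with connected fibres (again projective subspaces), so they are in bijection with the components of $E_\omega\cap Z$ whenever the fibre is nonempty. The bijection is the simplex $v_F*\omega$ for $\omega\in L$.

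In case (b), $r>s$, so the fibre condition never fails. Every $\omega\in L$ gets a cone simplex, giving $\Delta'=\Delta\cup(v_F*L)$.

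In case (a), the simplices meeting $Z$ are exactly the faces of $\tau_Z$ and the simplices of the star of $\tau_Z$. Here $v_F*\omega$ exists iff $\omega\not\supseteq\sigma$. The surviving simplices are then of two forms: faces of $\tau_Z$ other than $\tau_Z$ joined to $v_F$, and, for $\eta$ in the link of $\tau_Z$, the joins $v_F*\rho*\eta$ with $\rho$ a proper face of $\tau_Z$. The simplices containing $\tau_Z$ are deleted. This is precisely the stellar subdivision at $\tau_Z$.

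The final clause comes from the same local model. If $\omega\in L$ and $j\in\sigma$, then $Z\subseteq E_j$, so $E_\omega\cap Z\subseteq E_{\omega\cup\{j\}}$. The component of $E_{\omega\cup\{j\}}$ containing $E_\omega\cap Z$ therefore meets $Z$ and lies in $L$. The exception is when it equals the stratum $Z$ itself, which happens exactly in case (a) for $\omega\cup\{j\}=\sigma$.

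The main obstacle I expect is the bookkeeping of connected components, since $\Delta$ is a $\Delta$-complex rather than a nerve. Both the faithful matching of components of $E'_\omega$ with components of $E_\omega$, and the matching of components of $F\cap E'_\omega$ with components of $E_\omega\cap Z$, rely on connectedness of the real fibres. I would isolate this as a preliminary claim proved in the chart.
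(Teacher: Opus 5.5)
Your proposal is correct and is essentially the paper's argument. Both work in the local model where $Z$ and the components of $D_Y$ are coordinate subspaces, and both read off from the blow-up charts that $F\cap E'_\omega$ over $Z$ is empty exactly when $\omega\supseteq\sigma$ and $r=s$. Both index the new simplices $v_F*\omega$ by components of $E_\omega\cap Z$, and both obtain the last clause from $Z\subseteq E_j$. Your connected-fibre argument makes explicit the component bookkeeping that the paper delegates to \cite[Prop.~19]{dFKX17}.

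One caveat, which the paper shares: matching components of $E_\omega\cap Z$ with simplices $\omega\in L$ assumes each component of $E_\omega$ meets $Z$ in a connected set. This can fail. For example, a curve $Z\subset E_1$ meeting a transversal $E_2$ in two points produces two edges $v_2v_F$ but only one vertex $v_2\in L$. So in general $L$ should be read as the complex of components of $E_\omega\cap Z$, mapping possibly non-injectively to $\Delta$. This does not affect its use in Theorem \ref{thm:blowup-inv}.
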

\begin{proof}
Locally $Y=\R^n$, $E_j=\{u_j=0\}$ for $j\in\sigma$, $Z=\{u_\sigma=0,\ w_1=\dots=w_{r-s}=0\}$, and the other components of $D_Y$ through the point are coordinate hyperplanes $\{w_\ell=0\}$ with $\ell$ possibly among the first $r-s$ (normal crossings with $D_Y$). The blow-up is covered by charts in which $F$ is a coordinate hyperplane, the strict transforms $E'_j$ ($j\in\sigma$) and $\{w_\ell=0\}'$ are coordinate hyperplanes, and $\bigcap_{j\in\sigma}E'_j\cap F=\emptyset$ iff $r=s$: in the chart led by $u_{j_0}$ the strict transform $E'_{j_0}$ does not appear, and a chart led by some $w_\ell$, in which all $E'_j$ and $F$ meet, exists exactly when $r>s$. Reading off which intersections are non-empty gives (a) and (b), the components of $E_\omega\cap Z$ indexing the simplices of $L$; this is \cite[Prop.~19]{dFKX17} (see also \cite{Ste06}) with the strata kept track of. The last statement is the fact that $Z\subseteq E_j$ for $j\in\sigma$, so $E_\omega\cap Z\subseteq E_{\omega\cup\{j\}}$.
\end{proof}

\begin{theorem}[Invariance under one admissible blow-up]\label{thm:blowup-inv}
Let $\pi\colon Y\to X$ be a log-resolution and $\rho\colon Y'\to Y$ an admissible blow-up. Then for every $\alpha>0$ there is a homotopy equivalence $\Delta(\pi)^{\le\alpha}\simeq\Delta(\pi\circ\rho)^{\le\alpha}$, given by the natural inclusion in case \textup{(b)} of Lemma \ref{lem:dualblowup}, and by the canonical stellar subdivision homeomorphism (which identifies $\Delta^{\le\alpha}$ with a subcomplex of $\Delta'^{\le\alpha}$ containing every vertex except that $\tau_Z$ is replaced by its subdivision) in case \textup{(a)}, compatibly with the inclusions $\Delta^{\le\alpha}\subseteq\Delta^{\le\beta}$. Consequently $\DPH^\alpha_\bullet(X,\I;\pi)\cong\DPH^\alpha_\bullet(X,\I;\pi\circ\rho)$ as persistence modules.
\end{theorem}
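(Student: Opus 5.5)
The plan is a case analysis along Lemma \ref{lem:dualblowup}. At each level $\alpha$ we compare the full subcomplexes $\Delta^{\le\alpha}$ and $\Delta'^{\le\alpha}$ on the vertex sets $V_\alpha=\{v_i:\gamma_{E_i}\le\alpha\}$ and $V'_\alpha$. Strict transforms keep their weights (Lemma \ref{lem:mediant}), so $V'_\alpha$ is $V_\alpha$, plus $v_F$ exactly when $\gamma_F\le\alpha$. The only numerical input is the mediant inequality. Its lower half gives some $j_0\in\sigma$ with $\nu_{j_0}>0$ and $\gamma_{j_0}\le\gamma_F$. Its upper half, in case (a), gives $\gamma_F\le\max_{j\in\sigma,\nu_j>0}\gamma_j$. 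If $\gamma_F=+\infty$ (all $\nu_j=0$, or $\sigma=\emptyset$), then $v_F$ never enters. In that case $\Delta'^{\le\alpha}$ is the full subcomplex of $\Delta'$ on $V_\alpha$, which coincides with $\Delta^{\le\alpha}$: in case (b) new simplices all contain $v_F$, and in case (a) only simplices containing $\tau_Z$ are altered, and these have a vertex of infinite weight.

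\emph{Case (a), $r=s$.} If every vertex of $\tau_Z$ lies in $V_\alpha$, then every $\nu_j>0$ for $j\in\sigma$, so the upper half gives $\gamma_F\le\alpha$ and $v_F\in V'_\alpha$. By fullness (the remark after Definition \ref{def:dual}), $\Delta'^{\le\alpha}$ is then the stellar subdivision of $\Delta^{\le\alpha}$ at $\tau_Z$, hence canonically homeomorphic to it. If some vertex of $\tau_Z$ is missing from $V_\alpha$, then $\tau_Z\notin\Delta^{\le\alpha}$ and the subdivision does not touch $\Delta^{\le\alpha}$. So $\Delta^{\le\alpha}\subseteq\Delta'^{\le\alpha}$, with equality if $v_F\notin V'_\alpha$. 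Otherwise $\Delta'^{\le\alpha}=\Delta^{\le\alpha}\cup v_F*K$, where
\[
K=\mathrm{Lk}_{\Delta'}(v_F)\cap\Delta^{\le\alpha}=(\partial\tau_Z)|_{V_\alpha}*\mathrm{Lk}(\tau_Z)|_{V_\alpha}.
\]
Since some vertex of $\tau_Z$ is absent, $(\partial\tau_Z)|_{V_\alpha}$ is the whole face $\tau_Z\cap V_\alpha$. This face is non-empty because it contains $v_{j_0}$ ($\gamma_{j_0}\le\gamma_F\le\alpha$). Hence $K$ is a join with a non-empty simplex, so it is contractible, and gluing a cone along a contractible subcomplex makes the inclusion a homotopy equivalence.

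\emph{Case (b), $r>s$.} If $\gamma_F>\alpha$ nothing changes. Otherwise $\Delta'^{\le\alpha}=\Delta^{\le\alpha}\cup v_F*L_\alpha$, where $L_\alpha=L\cap\Delta^{\le\alpha}$ is the full subcomplex of $L$ on $V_\alpha$. The vertex $v_{j_0}$ lies in $L_\alpha$, since $Z\subseteq E_{j_0}$ and $\gamma_{j_0}\le\alpha$. By the last clause of Lemma \ref{lem:dualblowup}, for every $\omega\in L_\alpha$ the simplex spanned by $\omega$ and $v_{j_0}$ lies in $L$, and its vertices are in $V_\alpha$, so it lies in $L_\alpha$. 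Thus $L_\alpha$ is a cone with apex $v_{j_0}$, hence contractible, and again the inclusion $\Delta^{\le\alpha}\hookrightarrow\Delta'^{\le\alpha}$ is a homotopy equivalence.

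Compatibility with the structure maps $\Delta^{\le\alpha}\subseteq\Delta^{\le\beta}$ holds because every map used is either an inclusion or the restriction of the single global subdivision homeomorphism $|\Delta|\cong|\mathrm{Sd}_{\tau_Z}\Delta|$ followed by an inclusion. The squares therefore commute on the nose, and passing to homology gives the isomorphism of persistence modules. The step I expect to be most delicate is the mixed sub-case of (a), where $v_F$ enters but $\tau_Z$ does not. There one must check that the link computation inside the subdivision, together with the possible presence of weight-$+\infty$ vertices (with $\nu_j=0$) in $\sigma$, still yields a non-empty simplex factor. This is exactly where the lower half of the mediant inequality, restricted to $\nu_j>0$, is used.
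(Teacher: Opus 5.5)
Your proposal is correct and follows the paper's proof: you use the same case split along Lemma \ref{lem:dualblowup}, the upper mediant bound to force $v_F$ into the sublevel complex whenever $\tau_Z$ lies in it (giving the stellar subdivision), and the lower bound to produce $j_0\in\sigma$ with $\nu_{j_0}>0$ and $\gamma_{j_0}\le\gamma_F$, which makes the attaching complex a cone with apex $v_{j_0}$. The only difference is cosmetic: in the mixed sub-case of (a) you write the attaching complex as the join $(\tau_Z\cap V_\alpha)*\mathrm{Lk}(\tau_Z)|_{V_\alpha}$, read off from the link of $v_F$ in the stellar subdivision, whereas the paper writes it as $L^{\le\alpha}$ and gets the cone structure from the last clause of Lemma \ref{lem:dualblowup}; these are the same complex.
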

\begin{proof}
Write $\sigma^{\le\alpha}=\{j\in\sigma:\gamma_j\le\alpha\}$. If $\nu_F=0$ then $\gamma_F=+\infty$ and $\Delta'^{\le\alpha}$ is $\Delta^{\le\alpha}$ (subdivided at $\tau_Z$ in case (a) only if $\tau_Z\in\Delta^{\le\alpha}$, which forces some $\nu_j>0$, so this does not occur); nothing to prove. Assume $\nu_F>0$.

\emph{Case \textup{(a)}, $r=s$.} If all $\gamma_j\le\alpha$ ($j\in\sigma$), then $\tau_Z\in\Delta^{\le\alpha}$, all $\nu_j>0$, and by Lemma \ref{lem:mediant} $\gamma_F\le\max_j\gamma_j\le\alpha$, so $v_F\in\Delta'^{\le\alpha}$ and $\Delta'^{\le\alpha}$ is the stellar subdivision of $\Delta^{\le\alpha}$ at $\tau_Z$: a homeomorphism. It is exactly here that the upper mediant bound is indispensable: without it a simplex $\tau_Z$ could leave the sublevel complex before its replacement $v_F$ enters. If $\sigma^{\le\alpha}\subsetneq\sigma$, then $\tau_Z\notin\Delta^{\le\alpha}$ and $\Delta^{\le\alpha}$ is untouched by the subdivision; if moreover $\gamma_F>\alpha$ nothing changes. If $\gamma_F\le\alpha$, then $\Delta'^{\le\alpha}=\Delta^{\le\alpha}\cup(v_F*L^{\le\alpha})$, and $\sigma^{\le\alpha}\neq\emptyset$ because $\gamma_F\ge\min_{\nu_j>0}\gamma_j$. Pick $j_0\in\sigma^{\le\alpha}$. By the last clause of Lemma \ref{lem:dualblowup}, for every $\omega\in L^{\le\alpha}$ the simplex $\omega\cup\{j_0\}$ lies in $L$ unless it equals $\tau_Z$; but if $\omega\cup\{j_0\}=\tau_Z$ then every vertex of $\tau_Z$ would have weight $\le\alpha$, contradicting $\sigma^{\le\alpha}\subsetneq\sigma$. So $\omega\cup\{j_0\}\in L$, and its vertices have weight $\le\alpha$, so it lies in $L^{\le\alpha}$. Hence $L^{\le\alpha}$ is a cone with apex $v_{j_0}$, in particular contractible, and attaching a cone along a contractible subcomplex is a homotopy equivalence.

\emph{Case \textup{(b)}, $r>s$.} $\Delta'^{\le\alpha}=\Delta^{\le\alpha}$ if $\gamma_F>\alpha$; otherwise $\Delta'^{\le\alpha}=\Delta^{\le\alpha}\cup(v_F*L^{\le\alpha})$ with $\sigma^{\le\alpha}\neq\emptyset$ as before, and $L^{\le\alpha}$ is a cone with apex any $v_{j_0}$, $j_0\in\sigma^{\le\alpha}$, by the last clause of the lemma (no exception in this case). Again a homotopy equivalence.

Compatibility with $\alpha\le\beta$ is automatic: the equivalences are either subdivision homeomorphisms or the inclusions themselves.
\end{proof}

\begin{remark}\label{rem:mediantrole}
The proof applies verbatim to every admissible centre, that is, every smooth irreducible centre having simple normal crossings with the boundary, whether or not it is a stratum of $D_\pi$; and the only numerical input is the two-sided mediant bound. The upper bound $\gamma_F\le\max$ is exactly what forbids the loss of a simplex $\tau_Z$ from a sublevel complex before its replacement $v_F$ arrives; the lower bound $\gamma_F\ge\min$ is what guarantees that a newly created vertex is attached along a cone. Both are the divisorial analogue of the fact that blowing up cannot decrease log canonical thresholds.
\end{remark}

\subsection{Independence of the resolution: the factorization hypothesis}

Theorem \ref{thm:blowup-inv} compares $\pi$ with $\pi\circ\rho$. To compare two arbitrary log-resolutions $\pi_1\colon Y_1\to X$ and $\pi_2\colon Y_2\to X$ by this method one needs a chain
\[
Y_1=W_0\ \dashrightarrow\ W_1\ \dashrightarrow\ \cdots\ \dashrightarrow\ W_N=Y_2
\]
of models over $X$ in which each arrow is an admissible blow-up or the inverse of one. Proposition \ref{prop:domination} does not give this: it gives $W\to Y_1$ by admissible blow-ups and $W\to Y_2$ merely a morphism. We therefore state the required input as a hypothesis, and record where it is known.

\begin{hypothesis}[Weak factorization by admissible blow-ups, WF]\label{hyp:WF}
Any two log-resolutions of $(X,\I)$ are connected by a finite chain of admissible blow-ups and inverses of admissible blow-ups, through models that are log-resolutions of $(X,\I)$ (in particular, over $X$).
\end{hypothesis}

\begin{remark}[Status of \textup{(WF)}]\label{rem:WFstatus}
\begin{enumerate}[label=\textup{(\alph*)},leftmargin=2em]
\item In the complex algebraic setting (Convention \ref{conv:complex}, $X$ an algebraic variety, $\I$ an ideal sheaf), we expect (WF) to follow from the weak factorization theorem of Abramovich--Karu--Matsuki--W\l odarczyk \cite{AKMW02} and W\l odarczyk \cite{Wlo03}, in the form relative to a boundary divisor and to the base $X$: one needs the intermediate blow-ups to be along centres having simple normal crossings with the accumulated boundary \emph{and} the intermediate models to remain models over $X$ that are log-resolutions of $(X,\I)$. The first requirement is part of the statement of \cite[Thm.~0.3.1]{AKMW02}; the second is what \cite{dFKX17} use for the unweighted dual complex, and is obtained there by factoring the birational map $Y_1\dashrightarrow Y_2$ over $X$ (the maps are projective over $X$ and the factorization is functorial for open embeddings, so it may be performed relative to $X$; once the accumulated boundary contains the support of the total transform of $\I$, every intermediate model is a log-resolution of the pair). We have not reproduced this verification in detail and prefer to state (WF) as a hypothesis, so that Theorem \ref{thm:invariance} is conditional; the reader who accepts the argument of \cite{dFKX17} may regard it as unconditional in the complex algebraic case.
\item For germs of surfaces, real or complex analytic, (WF) holds unconditionally, because any two log-resolutions of a normal surface germ are dominated by a third obtained from each by a sequence of point blow-ups: a proper birational morphism between smooth surface germs factors into point blow-ups (Zariski; the argument is local and analytic), and point blow-ups are admissible in dimension two. See Theorem \ref{thm:surface}.
\item In the real analytic setting in dimension $\ge3$ we do not know a reference for (WF), and we do not claim it. The definitions of Section \ref{sec:dual} make sense without it; what depends on it is only the statement that $\DPH$ is independent of the resolution, Theorem \ref{thm:invariance}. A model-free proof, via a real analogue of the valuation-space skeleta of \cite{MN15,BFJ08}, would remove the hypothesis; see Question \ref{q:realWF}.
\end{enumerate}
\end{remark}

\begin{theorem}[Birational invariance of the filtered dual complex]\label{thm:invariance}
Assume \textup{(WF)} for $(X,\I)$. Then for any two log-resolutions $\pi,\pi'$ and every $\alpha>0$ the sublevel complexes $\Delta(\pi)^{\le\alpha}$ and $\Delta(\pi')^{\le\alpha}$ are homotopy equivalent, compatibly with the inclusions $\Delta^{\le\alpha}\subseteq\Delta^{\le\beta}$. Consequently $\DPH^\alpha_\bullet(X,\I)$ is, up to isomorphism of persistence modules, independent of the log-resolution.
\end{theorem}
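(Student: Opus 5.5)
The plan is to chain together single-step equivalences from Theorem \ref{thm:blowup-inv} along the factorization supplied by (WF). Given log-resolutions $\pi\colon Y\to X$ and $\pi'\colon Y'\to X$, (WF) provides a chain $Y=W_0\dashrightarrow W_1\dashrightarrow\cdots\dashrightarrow W_N=Y'$ of log-resolutions of $(X,\I)$ over $X$. Each arrow is either an admissible blow-up $W_{i+1}\to W_i$ or the inverse of one, $W_i\to W_{i+1}$. In both cases Theorem \ref{thm:blowup-inv} applies to the pair consisting of the lower model and its blow-up, since the lower model is a log-resolution. This yields, for every $\alpha>0$, a homotopy equivalence $h_i^\alpha$ between $\Delta(W_i)^{\le\alpha}$ and $\Delta(W_{i+1})^{\le\alpha}$. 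For an inverse blow-up we use a homotopy inverse of the map given by the theorem. Composing these gives $\Delta(\pi)^{\le\alpha}\simeq\Delta(\pi')^{\le\alpha}$.

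The main point is compatibility with the inclusions $\iota_{\alpha\beta}\colon\Delta^{\le\alpha}\subseteq\Delta^{\le\beta}$. For one step I will check that the square formed by $h_i^\alpha$, $h_i^\beta$ and the two inclusions commutes strictly, working at the level of spaces.
\begin{itemize}
\item In case (b) the maps are inclusions into $\Delta'$, so the square commutes trivially.
\item In case (a) the maps are restrictions of the single stellar-subdivision homeomorphism $|\Delta|\to|\Delta'|$. The subtlety is that $\Delta^{\le\alpha}$ may lie in $\Delta'^{\le\alpha}$ either as a subdivided subcomplex or, when $\tau_Z\notin\Delta^{\le\alpha}$, untouched. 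Then $\Delta'^{\le\alpha}$ may additionally contain the cone $v_F*L^{\le\alpha}$, and on that part the map into $\Delta'^{\le\alpha}$ is only a homotopy equivalence by inclusion.
\end{itemize}
I will therefore define each one-step map uniformly as the inclusion of $|\Delta^{\le\alpha}|$, via the subdivision homeomorphism of $|\Delta|$, into $|\Delta'^{\le\alpha}|$. This is a restriction of one fixed map $|\Delta|\to|\Delta'|$ with image in $|\Delta'^{\le\alpha}|$.

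Well-definedness of that map needs one check. When $\tau_Z\in\Delta^{\le\alpha}$, the new vertex $v_F$ must be in $\Delta'^{\le\alpha}$, which is the upper mediant bound of Lemma \ref{lem:mediant}. When $\tau_Z\notin\Delta^{\le\alpha}$, no subdivision is needed. Because every map is a restriction of the same global map, strict commutation with the inclusions is immediate. Theorem \ref{thm:blowup-inv} shows each restriction is a homotopy equivalence.

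Composing along the chain then gives a zigzag of morphisms of filtered spaces that are levelwise homotopy equivalences. Applying $H_k(-;\Z)$ turns these into isomorphisms of persistence modules over $(\R_{>0},\le)$. Inverting the backward arrows is legitimate at the homology level, since a natural transformation that is a levelwise isomorphism has a natural inverse.

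I expect the main obstacle to be the homotopy-level claim, namely that the composite homotopy equivalences commute with the inclusions up to homotopy. At each step I will try to avoid choosing homotopy inverses. The theorem only asserts compatibility and the DPH conclusion, so the zigzag of strictly commuting levelwise equivalences suffices, and I will state the homotopy statement as that zigzag. Tameness is not needed here, since only finitely many critical values occur in each model.
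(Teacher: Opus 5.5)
Your proposal is correct and follows the paper's proof: the paper also composes the one-step equivalences of Theorem \ref{thm:blowup-inv}, and their inverses, along the chain supplied by (WF). You add two refinements the paper leaves implicit in ``compose'': you realize each step as the restriction of a single global map $|\Delta|\to|\Delta'|$, so the squares with the inclusions commute strictly, and you use a zigzag inverted on homology instead of chosen homotopy inverses.
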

\begin{proof}
Each admissible blow-up in the chain provided by (WF) is a homotopy equivalence of filtered sublevel complexes by Theorem \ref{thm:blowup-inv}, and so is its inverse; compose.
\end{proof}

To summarize the dependencies: Section \ref{sec:valuative} needs only Proposition \ref{prop:domination}, and holds for real analytic pairs in every dimension; Theorem \ref{thm:invariance} needs (WF), and is unconditional for surface germs; in the complex algebraic case it is unconditional to the extent that the relative weak factorization discussed in Remark \ref{rem:WFstatus}(a) is accepted.

\begin{remark}[Homology of the exceptional set is not invariant]\label{rem:excnotinv}
One might prefer the space $\Exc^{\le\alpha}=\bigcup_{\gamma_E\le\alpha}E$ to its dual complex. Its homology in low degrees agrees with that of $\Delta^{\le\alpha}$ only under strong hypotheses (rational components, degrees below $\dim_\R E$), and it is not birationally invariant: blowing up a point on a curve $E$ of a surface resolution adds a component $\R\mathbb P^1\simeq S^1$ (real case) or $\mathbb P^1\simeq S^2$ (complex case) attached at a point, changing $H_1$ resp.\ $H_2$ while $\Delta^{\le\alpha}$ acquires a contractible leaf. The dual complex is the correct object.
\end{remark}

\begin{remark}[The two modules are different]\label{rem:different}
Take $X=\R^2$, $\I=(x^2,y^3)$ (or any $\mathfrak m$-primary ideal with several exceptional weights). By Corollary \ref{cor:isolated} the valuative module is $H_k(\R^2)\to H_k(\R^2\setminus0)$ with one critical value $\gamma_{\min}$. The filtered dual complex has one critical value per distinct exceptional weight, and $\DPH_0$ counts components of the sublevel graph. Neither determines the other: the first sees $X$ and only the minimum, the second sees the resolution and all weights but not $X$. As a check on the conventions: the toric divisor of weight $(3,2)$ has $\nu=6$, $a=4$, $\gamma=5/12$, and indeed $\mathrm{vol}\{x^4+y^6\le\varepsilon\}\asymp\varepsilon^{1/4+1/6}$. They share $\Gamma_\pi$, $\gamma_{\min}$, and the germ, and this paper claims nothing further about their relation; Section \ref{sec:outlook} explains why a comparison, if any, must be a duality.
\end{remark}

\begin{remark}[Comparison with the non-archimedean literature]\label{rem:literature}
In the complex algebraic setting, the function $\gamma$ (up to the factor $2$) is the restriction to divisorial valuations of the function $A(v)/v(\I)$ on the valuation space of \cite{JM12}, and $\Delta(\pi)$ is a skeleton of the Berkovich analytification onto which the valuation space retracts; the sublevel complexes $\Delta(\pi)^{\le\alpha}$ are then the intersections of the skeleton with sublevel sets of a weight function in the sense of \cite{MN15}, and the invariance of their homotopy type under change of model is of the same nature as the invariance of the essential skeleton \cite{MN15,KX16,Pay13}. We have not found, in the references considered here, an explicit formulation of Theorem \ref{thm:invariance} in the filtered, persistent form, nor of the mediant mechanism, but we do not exclude that in the complex algebraic case it can be deduced from those sources; a careful comparison is left to a sequel. What is specific to the present paper is the real analytic setting with Mather discrepancy, the identification of the weight through Theorem \ref{thm:volume}, and the elementary case analysis of Theorem \ref{thm:blowup-inv}.
\end{remark}

\section{Normal surface singularities}\label{sec:surfaces}

Let $(X,0)$ be a normal surface germ (real or complex) and $\I\subseteq\mathfrak m_0$ an $\mathfrak m_0$-primary ideal, so that $D_\pi=\Exc(\pi)$ for a log-resolution $\pi$, all $\pi(E)=\{0\}$, and Corollary \ref{cor:isolated} applies to the valuative module. Throughout this section we write $w_E$ for the weight actually used: $w_E=\gamma_E$ in the real case and $w_E=\lambda_E=2\gamma_E$ in the complex case (Convention \ref{conv:complex}); the two differ only by the fixed rescaling $\lambda_E=2\gamma_E$, i.e.\ by the global reparametrization $\alpha\mapsto2\alpha$ of the persistence parameter, so every incidence and homotopy statement below is the same for either choice, and only the numerical value of a critical level changes; the two modules are isomorphic after, and only after, this reparametrization. The dual complex is the weighted dual graph $(\Gamma(\pi),w)$: one vertex per exceptional curve, one edge per intersection point (loops for nodes of a component, multiple edges for curves meeting several times), and $\Delta^{\le\alpha}=\Gamma(\pi)^{\le\alpha}$ is the full subgraph on the vertices with $w_E\le\alpha$. (We write $\Gamma(\pi)$ for the graph and $\Gamma_\pi(X,\I)$ for the spectrum; the two should not be confused.)

\begin{theorem}[Surface case]\label{thm:surface}
For a normal surface germ and an $\mathfrak m_0$-primary ideal, $\DPH^\alpha_k(X,\I)=H_k(\Gamma(\pi)^{\le\alpha})$ is independent of the log-resolution, without any hypothesis, and vanishes for $k\ge2$; $\DPH^\alpha_0=\Z^{b_0(\Gamma^{\le\alpha})}$ and $\DPH^\alpha_1=\Z^{b_1(\Gamma^{\le\alpha})}$ with $b_1=\#\text{edges}-\#\text{vertices}+b_0$. Under a blow-up at a point $p$: if $p=E_i\cap E_j$ the edge $v_iv_j$ is subdivided by $v_F$ with $\gamma_F=(\nu_i\gamma_i+\nu_j\gamma_j)/(\nu_i+\nu_j)$; if $p$ lies on $E_i$ only, a leaf $v_F$ with $\gamma_F=(a_i+2)/(2\nu_i)>\gamma_i$ is attached to $v_i$; if $p\notin\Exc$, a vertex of weight $+\infty$ appears. None of these changes $H_\bullet(\Gamma^{\le\alpha})$ for any $\alpha$.
\end{theorem}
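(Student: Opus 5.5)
The plan is to reduce everything to Theorem \ref{thm:blowup-inv} and the factorization of surface morphisms into point blow-ups, and then read off the explicit weight formulas from Lemma \ref{lem:mediant} in dimension two.

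\emph{Structure of the graph and vanishing.} Since $\I$ is $\mathfrak m_0$-primary, every component of $D_\pi$ is exceptional and lies over $0$. Each $E_\sigma$ with $|\sigma|\ge3$ is empty, because three curves on a smooth surface cannot meet transversally at a point. Hence $\Delta(\pi)$ is a one-dimensional $\Delta$-complex, namely the graph $\Gamma(\pi)$ with one edge per point of $E_i\cap E_j$. Its full subgraphs $\Gamma^{\le\alpha}$ are graphs, so $H_k=0$ for $k\ge2$, $H_0$ and $H_1$ are free, and $b_1=\#E-\#V+b_0$ is the Euler characteristic identity.

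\emph{Local formulas.} In dimension $n=2$ a point centre has $r=2$, so Lemma \ref{lem:mediant} gives the three cases directly.
\begin{enumerate}[label=\textup{(\roman*)},leftmargin=2em]
\item If $p=E_i\cap E_j$, then $s=2$, so $\nu_F=\nu_i+\nu_j$ and $a_F=a_i+a_j+1$. Hence $A_F=A_i+A_j$ and $\gamma_F=(\nu_i\gamma_i+\nu_j\gamma_j)/(\nu_i+\nu_j)$, the convex combination. Lemma \ref{lem:dualblowup}(a) says the edge is subdivided.
\item If $p$ lies on $E_i$ only, then $s=1$, so $\nu_F=\nu_i$, $a_F=a_i+1$ and $\gamma_F=(a_i+2)/(2\nu_i)>\gamma_i$. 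By Lemma \ref{lem:dualblowup}(b), with $L=\{v_i\}$, a leaf is attached.
\item If $p\notin\Exc$, then $s=0$ and $\nu_F=0$, so $\gamma_F=+\infty$.
\end{enumerate}
Theorem \ref{thm:blowup-inv} then gives invariance of every $H_\bullet(\Gamma^{\le\alpha})$, compatibly in $\alpha$. This can also be checked by hand: a subdivided edge enters exactly when both endpoints do, because the mediant lies between $\gamma_i$ and $\gamma_j$. A leaf of weight above $\gamma_i$ enters only after $v_i$, and a contractible leaf does not change homology.

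\emph{Independence of the resolution.} Any two log-resolutions $\pi_1,\pi_2$ are dominated by a common smooth surface $W$. I would take the minimal resolution of the graph closure, or use Proposition \ref{prop:domination}, or Remark \ref{rem:elemdom} in the surface case. I would arrange for $W$ to be a log-resolution, for example by further point blow-ups, which are admissible. Each map $W\to\pi_k$ is a proper bimeromorphic morphism of smooth surfaces over $X$, so by Zariski's factorization it is a composite of point blow-ups. Point blow-ups are admissible in dimension two, since a point has normal crossings with any snc curve configuration. Every intermediate model is a log-resolution: the total transform of an snc divisor plus the new $\mathbb P^1$ is snc, and $\I\cdot\mathcal O$ stays principal. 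This is exactly (WF) as stated in Remark \ref{rem:WFstatus}(b). Theorem \ref{thm:invariance}, or iterating the previous paragraph along both chains, then gives the claimed isomorphism of persistence modules.

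\emph{Main obstacle.} The delicate point is the factorization step in the analytic, germ setting, and in the real case in particular. One must check three things. First, Zariski's theorem applies to germs over a compact representative, which works because the exceptional sets are compact. Second, in the real case a real blow-up of a real point has a real $\mathbb R\mathbb P^1$ as exceptional divisor, so real points of all strata stay nonempty. Third, there is the convention of using $w_E$ rather than $\gamma_E$ in the complex case. That convention is harmless by Convention \ref{conv:complex}, since only a global rescaling of $\alpha$ is involved. I would handle the real case by factoring the complexification equivariantly and observing that conjugation-invariant point centres of a real smooth surface are either real points or conjugate pairs. Only the real points contribute to the real dual graph, and conjugate pairs add no real divisor.
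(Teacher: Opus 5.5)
Your proposal is correct and follows the paper's proof essentially verbatim: the three cases are Lemma~\ref{lem:mediant} with $(r,s)=(2,2),(2,1),(2,0)$, fed into Lemma~\ref{lem:dualblowup} and Theorem~\ref{thm:blowup-inv}, and independence of the resolution is Theorem~\ref{thm:invariance} with (WF) supplied by the surface factorization of Remark~\ref{rem:WFstatus}(b). Your extra details (the dimension count showing $\Delta(\pi)$ is a graph, and the conjugation-equivariant factorization in the real case) only make explicit what the paper leaves implicit; the one small imprecision is in your hand check, where $v_F$ can enter before the subdivided path is complete, as a leaf on the lower-weight endpoint, but this is exactly the harmless cone case of Theorem~\ref{thm:blowup-inv}.
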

\begin{proof}
The three cases are Lemma \ref{lem:mediant} with $(r,s)=(2,2),(2,1),(2,0)$ and Theorem \ref{thm:blowup-inv}. That the subdivided edge does not disconnect $\Gamma^{\le\alpha}$ is precisely the upper mediant bound: $\gamma_F$ lies between $\gamma_i$ and $\gamma_j$, so $v_F$ enters no later than the later of $v_i,v_j$. Independence of the resolution is Theorem \ref{thm:invariance} together with Remark \ref{rem:WFstatus}(b).
\end{proof}

\begin{remark}\label{rem:disconnected}
The subgraph $\Gamma^{\le\alpha}$ is not connected in general even when $\Gamma$ is a tree, so $\DPH_0$ can have rank $>1$ at intermediate levels; and $H_1(\Gamma^{\le\alpha})$ can be non-zero before all vertices have entered if $\Gamma$ has several cycles. Both phenomena occur below.
\end{remark}

\subsection{Examples with the actual Jacobian orders}

\begin{example}[$A_n$, $n\ge1$]\label{ex:An}
$X=\{xy=z^{n+1}\}\subset\C^3$, $\I=\mathfrak m$, minimal resolution with chain $E_1-\dots-E_n$. The valuations satisfy $v_i(x)=i$, $v_i(y)=n+1-i$, $v_i(z)=1$, so $\nu_i=1$; $k_{E_i}(X)=0$ and $j_X=(y,x,(n+1)z^n)$ give (Remark \ref{rem:mather})
\[
a_i=\ord_{E_i}(j_X)=\min(i,\ n+1-i),\qquad\lambda_i=1+\min(i,\ n+1-i)
\]
(the term $n\,v_i(z)=n$ is never the minimum). Thus $\lambda_1=\lambda_n=2$ and the weights increase towards the middle of the chain, symmetrically.

For $n=1,2$ all weights are equal ($=2$) and $\Gamma^{\le\alpha}$ is $\emptyset$ or the whole chain. For $n\ge3$: at $\alpha=2$, $\Gamma^{\le2}=\{v_1,v_n\}$ is two isolated vertices, $\DPH^2_0=\Z^2$; at $\alpha=3$, $\{v_1,v_2,v_{n-1},v_n\}$, still two components if $n\ge5$; the two arms meet at $\alpha=1+\lfloor(n+1)/2\rfloor$, from which point $\DPH_0=\Z$. So the module in degree $0$ has one interval $[2,\infty)$ and one interval $[2,\,1+\lfloor(n+1)/2\rfloor)$; $\DPH_1=0$. The number of critical values is $\lfloor(n+1)/2\rfloor$, and $\rlct$ (the level $2$) sees only the two end curves. For instance $n=4$ gives $\lambda=(2,3,3,2)$ and $n=5$ gives $\lambda=(2,3,4,3,2)$.
\end{example}

\begin{example}[Cycles: cusp singularities]\label{ex:cusp}
For a cusp singularity the minimal resolution graph is a cycle $C_r$ (or a nodal rational curve when $r=1$, a loop). Writing $\lambda_{(1)}\le\dots\le\lambda_{(r)}$ for the ordered weights, $\DPH^\alpha_1=\Z$ exactly for $\alpha\ge\lambda_{(r)}$, since a proper full subgraph of a cycle is a disjoint union of paths; and $\DPH^\alpha_0=\Z^{c(\alpha)}$ where $c(\alpha)$ is the number of maximal arcs of the cycle all of whose vertices have weight $\le\alpha$. For the hypersurface cusps $x^p+y^q+z^r+xyz=0$, $\tfrac1p+\tfrac1q+\tfrac1r<1$, one has $k_{E_i}=-1$ on the minimal resolution and $\nu_i=1$ for $\I=\mathfrak m$, so $\lambda_i=\ord_{E_i}(j_X)$ by Remark \ref{rem:mather}, and $a_i=\lambda_i-1\ge0$ as it must be. We do not tabulate these orders here; the qualitative point is independent of them: the class in $\DPH_1$ is born at the largest weight of the cycle, not at $\rlct$, so the threshold and the persistence module see different divisors.
\end{example}

\begin{remark}[Weighted homogeneous singularities]\label{rem:wh}
For $X=\{f=0\}$ with $f$ weighted homogeneous with an isolated singularity and $\I=\mathfrak m$, the minimal good resolution graph is star-shaped, and $\Gamma^{\le\alpha}$ is disconnected until the central vertex enters. The tabulation of $\nu$ and $a$ along the arms (from the weights and continued fractions) is deferred.
\end{remark}

\section{Comparison and outlook}\label{sec:outlook}

The two modules have opposite variance. $\AH^\bullet_k$ is indexed by superlevel sets $\{\delta\ge\alpha\}$ and is a module over $(\R_{>0},\ge)$: it starts at $H_k(X)$ for small $\alpha$ and loses cycles as $\alpha$ grows. $\DPH^\bullet_k$ is indexed by sublevel complexes and is a module over $(\R_{>0},\le)$: it starts empty and gains cycles as $\alpha$ grows. A ``comparison map'' between them therefore cannot be a morphism of persistence modules over a common poset; it would have to be a duality (Alexander or Lefschetz type, relative to the link), or a construction relative to a third object. This constrains the answers to the following questions.

\begin{question}\label{q:chainsonres}
Is there a natural persistence module built from chains on the resolution $\Xt$ (rather than on $X$) that maps to $\DPH_\bullet(X,\I)$, and for which the map is an isomorphism for surfaces? The natural candidates (chains supported in $\Exc^{\le\alpha}$, or chains in $\Xt\setminus\Exc^{<\alpha}$) compute homologies of spaces, and Remark \ref{rem:excnotinv} shows that these are not birational invariants; some quotient by the top-degree homology of the components seems necessary.
\end{question}

\begin{question}\label{q:comparison}
For $V(\I)$ of positive dimension both modules are non-trivial: the valuative one on $X$ (Example \ref{ex:arrangements}), the dual one on $\Xt$. Is there a duality between them, e.g.\ from the Leray spectral sequence of $\pi$ restricted to the strata of $\gamma_{\min}$, or from Alexander duality in $X$ between $X\setminus Z_\alpha$ and $Z_\alpha=\pi(\Exc^{<\alpha})$?
\end{question}

\begin{question}\label{q:realWF}
Can Theorem \ref{thm:invariance} be proved for real analytic pairs in all dimensions without Hypothesis \ref{hyp:WF}, by a model-free argument on a real analogue of the valuation space? The invariance of the reach relation (Lemma \ref{lem:reach}) and of $\delta$ (Theorem \ref{thm:divcomp}) suggests that the filtered dual complex should be recoverable from the poset of divisorial valuations ordered by weight, with incidences read off from centres.
\end{question}

\begin{question}\label{q:which}
Which weighted dual complexes arise from pairs $(X,\I)$? For surfaces the unweighted graphs are classified by Grauert's criterion; the weights are constrained by the mediant inequalities and by $\min\lambda=\rlct$. Is the persistence module of the weighted dual graph determined by finitely many other invariants of the germ, and does it distinguish germs with the same $\rlct$ and the same multiplicity $m$ of the threshold?
\end{question}

\section*{Acknowledgements}
The author acknowledges the support of FAPESP, Brazil, grant no.\ 2019/21181-0, and CAPES, Brazil, MATH-AmSud program, grant no.\ 88881.179491/2025-01. He thanks an anonymous referee for a counterexample that shaped the present formulation, and a careful reader for pointing out the distinction between one-sided domination and factorization.


\begin{thebibliography}{99}

\bibitem{AKMW02} D. Abramovich, K. Karu, K. Matsuki, and J. W\l odarczyk, Torification and factorization of birational maps, J. Amer. Math. Soc. 15 (2002), no. 3, 531--572.

\bibitem{BM88} E. Bierstone and P. D. Milman, Semianalytic and subanalytic sets, Publ. Math. Inst. Hautes \'Etudes Sci. 67 (1988), 5--42.

\bibitem{BM97} E. Bierstone and P. D. Milman, Canonical desingularization in characteristic zero by blowing up the maximum strata of a local invariant, Invent. Math. 128 (1997), 207--302.

\bibitem{BB00} L. Birbrair and J.-P. Brasselet, Metric homology, Comm. Pure Appl. Math. 53 (2000), no. 11, 1434--1447.

\bibitem{BB02} L. Birbrair and J.-P. Brasselet, Metric homology for isolated conical singularities, Bull. Sci. Math. 126 (2002), 87--95.

\bibitem{BFJ08} S. Boucksom, C. Favre, and M. Jonsson, Valuations and plurisubharmonic singularities, Publ. Res. Inst. Math. Sci. 44 (2008), no. 2, 449--494.

\bibitem{dFD14} T. de Fernex and R. Docampo, Jacobian discrepancies and rational singularities, J. Eur. Math. Soc. (JEMS) 16 (2014), no. 1, 165--199.

\bibitem{dFKX17} T. de Fernex, J. Koll\'ar, and C. Xu, The dual complex of singularities, in: Higher dimensional algebraic geometry, Adv. Stud. Pure Math. 74 (2017), 103--129.

\bibitem{vdD98} L. van den Dries, Tame topology and o-minimal structures, London Math. Soc. Lecture Note Ser., vol. 248, Cambridge Univ. Press, 1998.

\bibitem{GM88} M. Goresky and R. MacPherson, Stratified Morse Theory, Ergeb. Math. Grenzgeb. (3) 14, Springer, Berlin, 1988.

\bibitem{Gru26} N. G. Grulha Jr., On the divisorial geometry of volume asymptotics of sublevel sets, arXiv:2606.30171 [math.AG] (2026).

\bibitem{GS26} N. G. Grulha Jr. and T. da Silva, An algebraic--theoretic formulation of divisorial asymptotic homology, preprint, 2026.

\bibitem{Har75} R. M. Hardt, Stratification of real analytic mappings and images, Invent. Math. 28 (1975), 193--208.

\bibitem{Hir64} H. Hironaka, Resolution of singularities of an algebraic variety over a field of characteristic zero, Ann. of Math. (2) 79 (1964), 109--326.

\bibitem{Hir75} H. Hironaka, Flattening theorem in complex-analytic geometry, Amer. J. Math. 97 (1975), no. 3, 503--547.

\bibitem{JM12} M. Jonsson and M. Musta\c t\u a, Valuations and asymptotic invariants for sequences of ideals, Ann. Inst. Fourier (Grenoble) 62 (2012), no. 6, 2145--2209.

\bibitem{Kol97} J. Koll\'ar, Singularities of pairs, Proc. Sympos. Pure Math. 62, Part 1, Amer. Math. Soc., 1997, pp. 221--287.

\bibitem{Kol07} J. Koll\'ar, Lectures on Resolution of Singularities, Annals of Math. Studies, vol. 166, Princeton Univ. Press, Princeton, NJ, 2007.

\bibitem{KX16} J. Koll\'ar and C. Xu, The dual complex of Calabi--Yau pairs, Invent. Math. 205 (2016), no. 3, 527--557.

\bibitem{KW26} D. Kosta and D. Windisch, Classification of real hyperplane singularities by real log canonical thresholds, SIAM J. Appl. Algebra Geom. 10 (2026), no. 2, 238--260.

\bibitem{Mus12} M. Musta\c t\u a, IMPANGA lecture notes on log canonical thresholds, in Contributions to Algebraic Geometry, Eur. Math. Soc., Z\"urich, 2012, pp. 407--442.

\bibitem{MN15} M. Musta\c t\u a and J. Nicaise, Weight functions on non-archimedean analytic spaces and the Kontsevich--Soibelman skeleton, Algebr. Geom. 2 (2015), no. 3, 365--404.

\bibitem{Pay13} S. Payne, Boundary complexes and weight filtrations, Michigan Math. J. 62 (2013), no. 2, 293--322.

\bibitem{Sai07} M. Saito, On real log canonical thresholds, preprint, 2007, arXiv:0707.2308.

\bibitem{Ste06} D. A. Stepanov, A remark on the dual complex of a resolution of singularities, Uspekhi Mat. Nauk 61 (2006), 185--186.

\bibitem{Val10} G. Valette, Vanishing homology, Selecta Math. (N.S.) 16 (2010), no. 2, 267--296.

\bibitem{Wat09} S. Watanabe, Algebraic geometry and statistical learning theory, Cambridge Monogr. Appl. Comput. Math., vol. 25, Cambridge Univ. Press, 2009.

\bibitem{Wat24} S. Watanabe, Recent advances in algebraic geometry and Bayesian statistics, Inf. Geom. 7 (Suppl. 1), S187--S209 (2024).

\bibitem{Wlo03} J. W\l odarczyk, Toroidal varieties and the weak factorization theorem, Invent. Math. 154 (2003), no. 2, 223--331.

\end{thebibliography}
\end{document}